\documentclass[11pt]{amsart}

\usepackage{amsmath,amssymb,amscd,amsthm,graphicx,enumerate}

\usepackage{xcolor}
\usepackage{hyperref}
\hypersetup{breaklinks=true}

\numberwithin{equation}{section}
\theoremstyle{plain}

\makeatletter
\newtheorem*{rep@theorem}{\rep@title}
\newcommand{\newreptheorem}[2]{%
\newenvironment{rep#1}[1]{%
 \def\rep@title{#2 \ref{##1}}%
 \begin{rep@theorem}}%
 {\end{rep@theorem}}}
\makeatother

\newtheorem{theorem}[equation]{Theorem}
\newreptheorem{theorem}{Theorem}

\newtheorem{proposition}[equation]{Proposition}
\newtheorem{lemma}[equation]{Lemma}
\newtheorem{corollary}[equation]{Corollary}

\theoremstyle{remark}

\theoremstyle{definition}

\newcommand{\eps}{\varepsilon}

\begin{document}

\title[Ancient flows with fast or slow convergence]{Ancient cylindrical flows with fast or slow convergence}
\author{Kyeongsu Choi}
\author{Robert Haslhofer}

\begin{abstract}
In a recent breakthrough, Bamler-Lai proved the mean-convex neighborhood conjecture in all dimensions by showing that any nontrivial ancient asymptotically cylindrical mean curvature flow is -- up to splitting Euclidean factors -- either a translating bowl, or an ancient oval, or a translating oval-bowl. In this paper, we provide a short alternative argument for two of the three scenarios. Specifically, using more elementary/traditional methods, we show that if the convergence to the round cylinder is fast then the solution is a bowl times a Euclidean factor, and if the convergence is slow then the solution is an ancient oval. Moreover, the present paper also yields a new proof of the mean-convex neighborhood conjecture for neck-singularities that substantially simplifies and streamlines the approach from our prior work joint with Hershkovits (Acta '22) and Hershkovits-White (Inventiones '22). 
\end{abstract}

\maketitle

\section{Introduction}

In a recent breakthrough \cite{BL1,BL2}, Bamler-Lai classified all ancient asymptotically cylindrical flows, i.e. all ancient unit-regular integral Brakke flows whose tangent flow at $-\infty$ is a cylinder. Specifically, they proved:

\begin{theorem}[Bamler-Lai]
Any nontrivial ancient asymptotically cylindrical flow in arbitrary dimensions is up to splitting Euclidean factors either a translating bowl, or an ancient oval, or a translating oval-bowl.
\end{theorem}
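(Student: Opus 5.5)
The plan is to study the flow through its rescaled version. After translating in space-time and splitting off any Euclidean factors, write $\bar M_\tau = e^{\tau/2} M_{-e^{-\tau}}$ as $\tau\to-\infty$. By the asymptotically cylindrical assumption, this converges to $\Gamma=\mathbb{R}^k\times S^{n-k}(\sqrt{2(n-k)})$. On larger and larger balls we represent $\bar M_\tau$ as a graph of a function $u(\cdot,\tau)$ over $\Gamma$. We then expand $u$ in eigenfunctions of the Jacobi operator $L=\Delta-\tfrac12 x\cdot\nabla+1$, which is self-adjoint in the Gaussian $L^2$ space. The spectrum splits $u$ into an unstable part (constants and linear functions in the $\mathbb{R}^k$ variables, and the rotation-type modes), a neutral part (quadratics $x_ix_j-2\delta_{ij}$), and a stable part. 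A Merle--Zaag type ODE lemma applied to these three projections shows that, for $\tau\to-\infty$, either the neutral part dominates (\emph{slow convergence}) or the unstable part dominates (\emph{fast convergence}). The neutral-mode cutoff problem is handled with the standard graphical radius $\rho(\tau)\to\infty$ and Gaussian weights.

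\textbf{Fast case.} Here $u$ decays exponentially, with profile $e^{\tau/2}$ times a linear function in some direction. I would show that the flow is asymptotically a translator. First, compare with bowls times $\mathbb{R}^{k-1}$ by moving-plane / Alexandrov reflection along the $\mathbb{R}^k$ directions, to get rotational symmetry in the $S^{n-k}$ factor and monotonicity in one cylindrical axis. Second, use the fine asymptotics $u\approx c\,e^{\tau/2}x_1$ to identify the translation speed. Third, run a Brendle--Choi type uniqueness argument via the linearized translator equation to conclude that the flow is bowl$\times\mathbb{R}^{k-1}$.

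\textbf{Slow case.} The neutral projection satisfies the ODE $\dot a = -a^2$ (matrix form) to leading order, so $u\sim -\frac{Q(x)}{|\tau|}$ for some nonnegative quadratic form $Q$. If $Q$ has full rank, we show that the flow is compact and becomes an ancient oval, by the Angenent--Daskalopoulos--Sesum / Du--Haslhofer classification of the $k$-parameter family. If $Q$ is degenerate, the null directions split off or produce oval-bowl behavior. Distinguishing this last case, where the flow is noncompact with slow convergence in some directions and fast in others, is the genuine difficulty. This is exactly the step where Bamler--Lai need new ideas, and I expect it to be the main obstacle.

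For the full theorem I would handle the mixed regime by a dimension reduction. Restrict to the kernel of $Q$, analyze that subspace as in the fast case, and glue the two analyses using a barrier/comparison with translating oval-bowls. This identifies the third family.
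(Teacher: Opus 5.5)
Your proposal has a genuine gap, and it is where you suspect it. The mixed regime $0<\mathrm{rank}\,Q<k$ is the only case that produces oval-bowls, and it is not proved; the sketched ``dimension reduction'' would not work. In that regime $\mathcal{M}$ is not a product, so ``restricting to $\ker Q$'' has no meaning. The best-fitting-translator comparison you want to reuse closes in the fast case only because the admissible graphical radius there is $\rho(\tau)=e^{-\tau/10}$. This makes the error coefficients $O(e^{\tau/10})$, which is integrable in $\tau$, and the cutoff errors $O(e^{100\tau})$, so the Merle--Zaag lemma can push $w^X=\hat u^X-\hat v^X$ down to $O(e^{50\tau})$. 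Once some directions converge only at rate $|\tau|^{-1}$, the graphical radius is only $|\tau|^{\gamma}$ and the profile deviates from $\Gamma$ at order $|\tau|^{-1}$. The errors are then neither integrable nor exponentially small, and this bookkeeping breaks down; that is why Bamler--Lai needed the PDE--ODI principle, a local Harnack inequality and induction over thresholds. Also, ``comparison with translating oval-bowls'' presupposes that $\mathcal{M}$ is a translator, and that oval-bowls exist in all dimensions, can be matched to $\mathcal{M}$, and are unique given those parameters; none of this is supplied. The paper itself does not reprove this case: it gives new arguments only for fast and slow convergence and relies on Bamler--Lai (and Choi--Haslhofer in $\mathbb{R}^4$) for the mixed case. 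A smaller slip: the rotation modes $y_iy_j$ with $i\le k<j$ have eigenvalue $0$, so they are neutral. The unstable space is $\mathrm{span}\{1,y_1,\ldots,y_{n+1}\}$.

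Your fast and slow cases also lack their key steps.

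\emph{Fast case.} Running moving planes on a general unit-regular integral Brakke flow, not known to be smooth, mean-convex or noncollapsed, needs the Hopf lemma for Brakke flows, which this paper deliberately bypasses. Even granting it, you get only $O(n+1-k)$-symmetry and monotonicity, not the $\mathbb{R}^{k-1}$-splitting for $k\ge 2$, which you cannot simply split off at the outset. A Brendle--Choi argument via the linearized translator equation presupposes that $\mathcal{M}$ is a translator, and ``asymptotically a translator'' does not give that. The paper instead compares $\mathcal{M}$ directly with $N_t=\Sigma+(t+\eta)e_k$ having the same vector $a$:
\begin{itemize}
\item Merle--Zaag applied to the $e^{-\tau}$-rescaled modes of $w^X$ rules out dominance of the linear modes, since $w^X=o(e^{\tau/2})$.
\item The shift $\eta$ kills the constant mode, giving $\|w^X\|_{\mathcal{H}}\le Ce^{50\tau}$ for all centers $X$ along $\mathbb{R}^k$, with a single universal $\eta$.
\item The bound $Z(X)\le C(x_k-t)^{1/2}$, Hausdorff closeness $(h-t)^{-20}$ of level sets, a cap bound via Kleene--Moller barriers, and sliding plus the strong maximum principle for Brakke flows then give $\mathcal{M}=\mathcal{N}$.
\end{itemize}
So the $\mathbb{R}^{k-1}$ factor comes out of the comparison rather than being assumed.

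\emph{Slow case.} ``We show that the flow is compact'' is the entire content, and you give no mechanism. No classification of ancient ovals is needed, since an ancient oval is just a compact noncollapsed solution. The paper argues as follows. Suppose $M_{t_0}$ has an $\eps$-bubble-sheet centered at distance $\ell\gg|t_0|^{1/2}$. At the earlier time $\tau_0\approx-\log\ell^2$, the expansion recentered at $\ell e_1$ contains the unstable mode $y_1$ with coefficient $\sim(\log\ell)^{-1}$. The visible-unstable-mode lemma makes this grow like $e^{(\tau-\tau_0)/4}$, which contradicts $\eps$-closeness at the bubble-sheet scale unless $\ell\le|t_0|^{1/2}(\log|t_0|)^2$. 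Blow-downs at $\partial\Omega_t$ then have fast convergence, so they are $\mathbb{R}^{k-1}\times\mathrm{Bowl}_{n+1-k}$ by the fast case, and this yields compactness.
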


As a corollary, this yielded the solution of Ilmanen's mean-convex neighborhood conjecture in arbitrary dimensions, which in turn implied uniqueness of mean curvature flow through cylindrical singularities, c.f. \cite{HershkovitsWhite}. In their proof Bamler-Lai introduced several brilliant new ideas, most notably their PDE-ODI principle, local Harnack inequality and induction over thresholds method, which will likely also have high impact in other areas.

The result by Bamler-Lai completed and unified a long line of research in the classification of singularities, initiated by Angenent-Daskalopoulos-Sesum \cite{ADS1,ADS2} and Brendle and the first author \cite{BC,BC2}, followed by the proof of the mean-convex neighborhood conjecture for neck-singularities in our work with Hershkovits and White \cite{CHH,CHHW},
and a comprehensive research program in $\mathbb{R}^4$ with our collaborators in \cite{CHH_wing,CHH_translator,CHH_linearized,DH_ovals,DH_shape,DH_no_rotation,CDDHS}, which culminated in our classification of ancient noncollapsed flows in $\mathbb{R}^4$ \cite{CH_ancient_r4}; see also the work of Du and his collaborators \cite{DH_blowdown,DuZhu,CDZ,CDZhao} for $\mathbb{R}^{n+1}$.

The purpose of the present paper is to provide a short alternative argument for two of the three scenarios in the Bamler-Lai classification. To describe our setting, given any nontrivial ancient asymptotically cylindrical flow $\mathcal{M}=\{ M_t \}$ in $\mathbb{R}^{n+1}$, consider the cylindrical function $u=u(y,\omega,\tau)$ that measures the deviation of the renormalized flow $\bar{M}_\tau=e^{\tau/2}M_{-e^{-\tau}}$ from the round cylinder $\Gamma=\mathbb{R}^k\times S^{n-k}(\sqrt{2(n-k)})$. It has been shown by Du-Zhu \cite{DuZhu}, generalizing earlier work by Du and the second author \cite{DH_shape,DH_no_rotation}, that for $\tau\to -\infty$ in suitable coordinates one always has 
\begin{equation}\label{spect_quant_intro}
u=-\sqrt{2(n-k)} \sum_{i\in \mathcal{I}} \frac{y_i^2-2}{4|\tau|}+o(|\tau|^{-1}) \quad \textrm{ for some } \mathcal{I}\subseteq\{ 1,\ldots, k\}.
\end{equation}
Following \cite{CH_ancient_r4}, the three cases $\mathcal{I}=\emptyset$, $|\mathcal{I}|=k$ and $0<|\mathcal{I}|<k$ are refereed to as fast convergence, slow convergence and mixed convergence, respectively. In the first case one in fact has exponential decay, specifically $u=O(e^{\tau/2})$. The main result of the present paper is the following:

\begin{theorem}[fast or slow convergence]\label{thm_fast_or_slow} Let $\mathcal{M}$ be a nontrivial ancient asymptotically cylindrical flow. If the convergence is fast, then $\mathcal{M}$ is $\mathbb{R}^{k-1}\times\mathrm{Bowl}_{n+1-k}$, and if the convergence is slow, then $\mathcal{M}$ is an ancient oval.
\end{theorem}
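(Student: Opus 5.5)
We plan to treat the two cases separately. In each, the first step is to show that the flow is noncollapsed, convex, and eternal or compact as appropriate. The second step is then to invoke known classification results in the noncollapsed convex setting.

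\emph{Fast convergence.} By the exponential decay $u=O(e^{\tau/2})$, the renormalized flow is, in the parabolic region, to leading order a linear combination of the unstable eigenfunctions $y_i$ and $\omega$-modes of eigenvalue $1/2$. Exponential decay at rate $e^{\tau/2}$ alone is not yet useful; one first shows that the $\omega$-type (rotation) modes can be removed by recentering, so that the dominant term is $a\cdot y\, e^{\tau/2}$ with $a\neq 0$ (if all such modes vanished, one would get even faster decay and, by a unique continuation argument at $-\infty$, $\mathcal{M}$ would be the cylinder, contradicting nontriviality). After a rotation, $a$ points in the $y_k$ direction.
\begin{itemize}
\item[(i)] Propagate the graphical description from the parabolic region to the intermediate and tip regions using barriers, namely shrinking/translating bowl-type barriers in the $y_k$ direction. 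This should show that the flow sweeps out all of space in time, i.e. it is eternal, and that it is noncollapsed and mean-convex, $H>0$ with an $\alpha$-Andrews condition. This is the place where Brakke-flow regularity and the fact that all blowup limits are again asymptotically cylindrical with fast convergence would be used, via an induction on the entropy or dimension.
\item[(ii)] Once $\mathcal{M}$ is an eternal noncollapsed mean-convex flow that splits off no line in the $y_k$ direction, show that $\partial_t H\ge 0$ via Hamilton's Harnack, and that equality holds asymptotically as $t\to\infty$. This makes it a translator, and the uniqueness of noncollapsed translators asymptotic to $\mathbb{R}^{k-1}\times S^{n-k}$ with this spectral behavior gives $\mathbb{R}^{k-1}\times\mathrm{Bowl}_{n+1-k}$.
\item[(iii)] Alternatively, follow the fine-neck argument of \cite{CHH}: the symmetry improvement gives $\mathrm{SO}(n+1-k)$-symmetry and translation invariance in $y_1,\dots,y_{k-1}$, reducing to the rotationally symmetric case treated there.
\end{itemize}

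\emph{Slow convergence.} Here $u\approx-\sqrt{2(n-k)}\sum_{i=1}^k (y_i^2-2)/(4|\tau|)$, so every direction bends inward. We plan to use this expansion in the parabolic region together with barriers (the ancient ovals / shrinking-sphere–type barriers of Angenent) in the intermediate region. The goal is to show that $M_t$ is compact for each $t$, with diameter of order $\sqrt{|t|\log|t|}$ in each $y_i$ direction, and that it is noncollapsed and convex. Convexity should follow from the asymptotic shape: near the caps the blowups are bowls, since they are asymptotically cylindrical with fast convergence and so fall under the first case. Once $\mathcal{M}$ is a compact ancient noncollapsed flow whose tangent flow at $-\infty$ is $\mathbb{R}^k\times S^{n-k}$ with all $k$ directions slow, the classification of compact ancient noncollapsed flows identifies it as an ancient oval. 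This classification is \cite{ADS2} for $k=1$, and \cite{DH_ovals,CDDHS,DuZhu}-type uniqueness in the $\mathrm{O}(k)\times\mathrm{O}(n+1-k)$-symmetric setting after a symmetry improvement for general $k$.

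The main obstacle is the step from spectral information in the parabolic region to global geometric control, i.e. noncollapsing and convexity, in the tip region. That is where one must know the blowups at the tips, and where the fast case must be proven first and then fed into the slow case.
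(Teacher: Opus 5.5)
There is a genuine gap, most seriously in the fast case, where none of your three routes contains a working mechanism.

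Step (i), getting eternality, noncollapsing and mean-convexity out of the spectral data, is the whole difficulty. ``Barriers \ldots\ should show'' plus an unspecified induction on entropy does not supply it. Nothing in your setup even guarantees that blowups near the tip are asymptotically cylindrical, let alone with fast convergence.

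Step (ii) fails even granting (i). Hamilton's rigidity needs $H$ to attain its spacetime supremum at an interior point. Equality only asymptotically as $t\to\infty$ shows at most that forward limits of $\mathcal{M}$ are translators, not $\mathcal{M}$ itself. Moreover, you invoke without a source a uniqueness theorem for noncollapsed translators with fast convergence in arbitrary $n,k$; the $\mathbb{R}^{k-1}$-splitting it would deliver is essentially the content of the theorem.

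Step (iii) has no counterpart for $k\geq 2$: no symmetry improvement yields translation invariance in $y_1,\ldots,y_{k-1}$. For $k=1$ it is the route of \cite{CHH,CHHW}, which presupposes (i) and the heavy geometric measure theory (including the Hopf lemma for Brakke flows) that the paper is designed to bypass.

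The idea you are missing is to compare $\mathcal{M}$ directly with the translator $\mathcal{N}=\{\Sigma+(t+\eta)e_k\}$ having the same vector $a$. With the graphical radius $\rho=e^{-\tau/10}$, the difference $w^X=\hat u^X-\hat v^X$ satisfies $w^X_\tau=\mathcal{L}w^X+E^X$, with errors of relative size $Ce^{\tau/10}$ plus cutoff errors $O(e^{100\tau})$. A Merle--Zaag argument \cite{MZ} excludes dominance of the eigenvalue-$\tfrac12$ modes, since that would force $\|w^X\|_{\mathcal{H}}\sim e^{\tau/2}$, contradicting $w^X=o(e^{\tau/2})$. Hence $w^X=\omega e^{\tau}+O(e^{11\tau/10})$. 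Choosing $\eta$ kills $\omega$, and then $\|w^X\|_{\mathcal{H}}\leq Ce^{50\tau}$, with a shift independent of the center on the axis. Combine this with the cylindrical-scale bound, the Hausdorff closeness of level sets, and the cap-size bound from Kleene--Moller barriers. A sliding argument with the strong maximum principle for Brakke flows \cite{CHHW} then gives $\mathcal{M}=\mathcal{N}$, without any a priori convexity, noncollapsing, or classification of translators.

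In the slow case your architecture (compactness first, then identification of the tip blowups via the fast case) agrees with the paper's, but the load-bearing step is absent. The parabolic expansion controls $M_t$ only in a region of size $|t|^{1/2}(\log|t|)^{\gamma}$. Angenent-type compact barriers, used as inner barriers, give lower rather than upper bounds on the extent of $M_t$, so they cannot yield compactness.

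The paper closes this with a quantitative argument (Proposition~\ref{thm:cylindrical_diameter}). Suppose $M_{t_0}$ has an $\varepsilon$-bubble-sheet of radius $r$ centered at $x_0$ with $\ell=|x_0|\geq 10|t_0|^{1/2}$. Recenter at the space-time point $(\ell e_1,\,t_0+r^2/(2(n-k)))$ and apply the expansion at the much earlier time $-\ell^2$. This produces a visible $y_1$-mode of size $\sim(\log\ell)^{-1}$. By Lemma~\ref{lem:unstable_mode_ODE} this mode keeps growing like $e^{(\tau-\tau_0)/4}$ up to the bubble-sheet scale. Then $\varepsilon$-closeness there forces $\ell\lesssim r(\log\ell)^2$, hence $|x_0|\leq|t_0|^{1/2}(\log|t_0|)^2$ and $r\geq|t_0|^{1/2}(\log|t_0|)^{-2}$. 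This makes $\overline{\Omega}_t$ compact and forces $r\to\infty$ at boundary points. The rescaled limits there are therefore nontrivial ancient cylindrical flows with fast convergence, to which the fast case applies.

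Finally, your last step is unnecessary and partly false. ``Ancient oval'' here just means a compact noncollapsed ancient flow. That follows at once when every point at very negative times is $\varepsilon$-close to a bubble-sheet or to $\mathbb{R}^{k-1}\times\mathrm{Bowl}_{n+1-k}$. An $\mathrm{O}(k)$-symmetry improvement cannot hold for $k\geq 2$, since \cite{DH_ovals} constructs ancient ovals that are only $\mathbb{Z}_2^k\times\mathrm{O}(n+1-k)$-symmetric.
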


Let us briefly outline our approach. For the case of fast convergence we use comparison with a best fitting translator.\footnote{This idea has been introduced first in \cite{CHH_translator}, where the parameter was the spectral eccentricity, and then implemented in full generality by Bamler-Lai \cite{BL1,BL2}.} Specifically, in the case of fast convergence the renormalized flow centered at any space-time point $X=(x,t)$ with $x=(x_1,\ldots,x_k,0,\ldots,0)$ has the fine cylindrical expansion
 \begin{equation}\label{eq_fine_neck}
 \hat{u}^X= \sum_{i=1}^k a_i y_i e^{\tau/2} + O(e^{\frac{5\tau}{9}})
 \end{equation}
 in Gaussian $L^2$-norm thanks to \cite{DH_blowdown,DuZhu}. Choosing $\mathcal{N}$ to be a translating $\mathbb{R}^{k-1}\times\mathrm{Bowl}_{n+1-k}$ with matching parameters $a_i(\mathcal{N})=a_i(\mathcal{M})$ the difference function $w^{X}=\hat{u}^X-\hat{v}^X$ satisfies $w^X=o(e^{\tau/2})$. Analyzing the evolution of $w^X$, we then show that actually $w^X=O(e^{50\tau})$, provided we shift $\mathcal{N}$ by an optimal parameter $\eta=\eta(\mathcal{M})$. This rapid decay allows us to conclude using the strong maximum principle for Brakke flows from \cite{CHHW}. We note that in the case of fast convergence we can work with the graphical radius $\rho(\tau)=e^{-\tau/10}$, and thus the cutoff errors are super-exponentially small. In particular, in contrast to the sophisticated PDE-ODI principle from \cite{BL1,BL2}, which is applicable in a very general setting, for the case of fast convergence the classical Merle-Zaag lemma from \cite{MZ} is actually enough.
 
In the case of slow convergence we use an elementary geometric argument to show that at any $t_0\ll 0$ the distance $d$ between any two centers of $\eps$-bubble-sheets is bounded by $d\leq |t_0|^{1/2}(\log|t_0|)^2$. Loosely speaking, the idea is that further back in time, specifically for $\tau=-\log(d^2)$, by the expansion \eqref{spect_quant_intro} we see a predictable deviation from the round bubble-sheet, but on the other hand by the Merle-Zaag type ODEs this deviation grows exponentially going forward in time, which would yield a contradiction for too large $d$. Using this bound, and taking into account the classification in the case of fast convergence, it is then easy to infer that any solution with slow convergence is compact and at any $t_0\ll 0$ every point is $\eps$-close to either $\mathbb{R}^k\times S^{n-k}$ or $\mathbb{R}^{k-1}\times\mathrm{Bowl}_{n+1-k}$, which is enough to conclude.
 
Theorem \ref{thm_fast_or_slow} leaves untouched only the case of mixed convergence, which (after splitting off trivial Euclidean factors) occurs if and only if the flow is an oval-bowl. The case of mixed convergence has been resolved first for noncollapsed solutions in $\mathbb{R}^4$ in our paper \cite{CH_ancient_r4}, and then in full generality in $\mathbb{R}^{n+1}$ by Bamler-Lai \cite{BL1,BL2}.
Moreover, since mixed convergence can only occur for bubble-sheets, i.e. for $k\geq 2$, the present paper also yields a new proof of the mean-convex neighborhood conjecture for neck-singularities, i.e. when the tangent flow is a round shrinking $\mathbb{R}\times S^{n-1}$.

\begin{corollary}[neck-singularities]
For the mean curvature flow in arbitrary dimensions all neck-singularities have a mean-convex neighborhood. In particular, mean curvature flow through neck-singularities is unique.
\end{corollary}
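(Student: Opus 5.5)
The corollary will be derived from Theorem \ref{thm_fast_or_slow} by the reduction of \cite{CHH,CHHW}. The key input is that for $k=1$ the mixed case is vacuous: in \eqref{spect_quant_intro} the index set $\mathcal{I}\subseteq\{1\}$ is either empty or all of $\{1\}$. So the plan has two steps: first classify every ancient asymptotically cylindrical flow whose tangent flow at $-\infty$ is $\mathbb{R}\times S^{n-1}$, and then feed that classification into the existing local-to-global machinery.

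\textbf{Step 1: classification for $k=1$.} Let $\mathcal{M}$ be a nontrivial ancient unit-regular integral Brakke flow whose tangent flow at $-\infty$ is the round shrinking $\mathbb{R}\times S^{n-1}$. By the Du--Zhu expansion \eqref{spect_quant_intro}, after fixing coordinates either $u=O(e^{\tau/2})$ (fast convergence) or $u=-\sqrt{2(n-1)}\,\frac{y_1^2-2}{4|\tau|}+o(|\tau|^{-1})$ (slow convergence). Theorem \ref{thm_fast_or_slow} then gives:
\begin{itemize}
\item fast convergence: $\mathcal{M}$ is $\mathbb{R}^{0}\times\mathrm{Bowl}_{n}$, i.e. the bowl soliton;
\item slow convergence: $\mathcal{M}$ is an ancient oval.
\end{itemize}
Together with the trivial case of the round shrinking cylinder itself, every such flow is therefore, up to rigid motion and parabolic rescaling, one of these three. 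In particular each is smooth, mean-convex (indeed convex), and noncollapsed.

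\textbf{Step 2: from the classification to a mean-convex neighborhood.} This is the step that needs care, and I would follow \cite{CHHW} essentially verbatim. Their argument shows that the mean-convex neighborhood conjecture at a neck-singularity follows once every \emph{ancient asymptotically cylindrical} limit flow is classified. Such limit flows arise as blowup limits at points near the singularity, and the class includes limits that are a priori only Brakke flows with possibly unknown multiplicity and regularity. Our Theorem \ref{thm_fast_or_slow} assumes exactly this weak setting, so it can replace their classification theorem, which rested on \cite{CHH,ADS2,BC}.

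Concretely, I would argue by contradiction. Suppose there are points $X_j\to X_0$ near the neck singularity at which the flow fails to be mean-convex, i.e. $H\le 0$ somewhere or the flow is not smooth there. Rescale by the regularity scale at $X_j$. The rescaled flows subconverge to an ancient flow which, by the cylindrical tangent flow at $X_0$ and monotonicity, is asymptotically cylindrical with tangent flow $\mathbb{R}\times S^{n-1}$ at $-\infty$. By Step 1 this limit is a cylinder, bowl, or oval, all of which have $H>0$ and are smooth at unit scale. Unit-regularity together with local regularity (Brakke--White) then upgrades convergence to smooth convergence at unit scale, contradicting the choice of $X_j$. The main obstacle is confirming that the limit satisfies precisely the hypotheses of Theorem \ref{thm_fast_or_slow}: it must be ancient, unit-regular, integral, and have the cylinder as tangent flow at $-\infty$. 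This is established in \cite{CHHW}, so I expect it to be a matter of citation.

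\textbf{Uniqueness.} Finally, uniqueness of the flow through neck-singularities follows from the mean-convex neighborhood statement via the nonfattening argument of \cite{HershkovitsWhite}.
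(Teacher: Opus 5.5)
Your Step 1 and your uniqueness step match the paper. For $k=1$ the set $\mathcal{I}\subseteq\{1\}$ is either empty or all of $\{1\}$, so Theorem \ref{thm_fast_or_slow} gives the bowl or an ancient oval. The corollary then follows from the standard argument in the final sections of \cite{CHH,CHHW}, together with \cite{HershkovitsWhite}.

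The flaw is in how you describe that standard argument in Step 2. A blowup limit $\lim \mathcal{D}_{\lambda_j}(\mathcal{M}-X_j)$ with $X_j\to X_0$ is \emph{not} shown to be asymptotically cylindrical ``by the cylindrical tangent flow at $X_0$ and monotonicity''. Monotonicity and upper semicontinuity of Gaussian density only give $\mathrm{Ent}\leq \mathrm{Ent}(\mathbb{R}\times S^{n-1})$. In higher dimensions there is no classification of shrinkers below that entropy, so this bound does not identify the tangent flow at $-\infty$. The neck structure at $X_0$ does not settle it either. That structure is only visible at scales large compared with the parabolic distance $d_j$ from $X_j$ to $X_0$, and those scales disappear in the limit when $\lambda_j d_j\to\infty$.

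Moreover, rescaling by the regularity scale can a priori produce a round shrinking sphere, whose tangent flow at $-\infty$ is $S^n$. This is why the paper's list of blowup limits near a neck-singularity is bowl, oval, cylinder \emph{or sphere}; your three-element list is incomplete. Getting from Theorem \ref{thm_fast_or_slow} to this four-element list is part of the machinery cited from \cite{CHH,CHHW}, not a consequence of monotonicity.

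Spheres are also smooth with $H>0$, so your contradiction argument still closes once you make two changes. Add the sphere to the list, and replace the false sentence about asymptotic cylindricity with this citation. With that correction your proof is the paper's.
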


Indeed, by Theorem \ref{thm_fast_or_slow} any nontrivial blowup limit near a neck-singularity must be either a translating bowl, ancient oval, or a round shrinking cylinder or sphere, which yields mean-convex neighborhoods by a standard argument (see the final sections of \cite{CHH,CHHW}, and see also \cite{BL2,BaoHaslhofer} for streamlined expositions), and hence yields uniqueness thanks to \cite{HershkovitsWhite}.
To conclude, let us note that the present paper substantially simplifies and streamlines the approach from our prior work joint with Hershkovits and White \cite{CHH,CHHW}, and in particular bypasses a lot of geometric measure theory including the difficult Hopf lemma for Brakke flows.\\

\noindent\textbf{Acknowledgments.}
KC has been supported by the KIAS Individual Grant MG078902, an Asian Young Scientist Fellowship, and the National Research Foundation (NRF) grants RS-2023-00219980 and RS-2024-00345403 funded by the Korea government (MSIT). RH has been supported by the NSERC Discovery grants RGPIN-2016-04331 and RGPIN-2023-04419.

\bigskip

\section{Ancient cylindrical flows with fast convergence}

Throughout this section, $\mathcal{M}=\{ M_t \}$ denotes a nontrivial ancient unit-regular integral Brakke flow in $\mathbb{R}^{n+1}$, whose tangent flow at $-\infty$ is a round shrinking cylinder with fast convergence. In other words, in suitable coordinates, the renormalized flow $\bar{M}_\tau=e^{\tau/2}M_{-e^{-\tau}}$ can be expressed as graph of a nontrivial function $u(\cdot,\tau)$ over domains exhausting the cylinder
$
\Gamma=\mathbb{R}^k\times S^{n-k}(\sqrt{2(n-k)})
$
for $\tau\to -\infty$ and we have $u=O(e^{\tau/2})$.\footnote{This holds uniformly on compact subsets, but also in $\mathcal{H}$-norm, see below.}

Thanks to the fast convergence, by \cite[Section 2.3]{DuZhu} the function
\begin{equation}\label{eq_graph_rad}
\rho(\tau)=e^{-\tau/10}
\end{equation}
is an admissible graphical radius, so in particular we have
\begin{equation}\label{eq_adm_rad}
\| u(\cdot,\tau)\|_{C^4(\Gamma\cap B_{2\rho(\tau)})} \leq C\rho(\tau)^{-2}.
\end{equation}

More generally, given any space-time point $X=(x,t)$, 
we work with the cylindrical function $u^X(\cdot,\tau)$ of the renormalized flow
$
\bar{M}^X_\tau=e^{\tau/2}(M_{t-e^{-\tau}}-x)
$
centered at $X$, and also consider the truncated version
\begin{equation}
\hat{u}^X(y,\tau)=u^X(y,\tau)\chi(|y|/\rho(\tau)),
\end{equation}
where $\chi$ is smooth with $\chi(z)=1$ for $|z|\leq 1/2$ and $\chi(z)=0$ for $|z|\geq 1$. 
As usual, most of our estimates will take place in the Hilbert space
\begin{equation}
\mathcal{H}=L^2(\Gamma,e^{-y^2/4}dV_\Gamma),
\end{equation}
and typically will hold for $\tau\leq \mathcal{T}(Z(X))$. Here, fixing a small constant $\eps_0>0$, the cylindrical scale $Z(X)$ is defined as the smallest $r<\infty$, such that the parabolically dilated flow $\mathcal{D}_r(\mathcal{M}-X)$ is $\eps_0$-close in $C^{\lfloor 1/\eps_0 \rfloor}$ in $B(0,1/\eps_0)\times [-2,-1]$ to the round cylinder with radius $r(t)=\sqrt{2(n-k)|t|}$.

The starting point for our analysis is that there exists a nonvanishing vector $a=a(\mathcal{M})\in\mathbb{R}^k$, such that in suitable coordinates the truncated cylindrical function $\hat{u}^X$ of the renormalized flow $\bar{M}^X_\tau$ centered at any space-time point $X=(x,t)$ with $x=(x_1,\ldots,x_k,0,\ldots, 0)$ has the fine expansion
 \begin{equation}\label{eq_fine_neck}
 \hat{u}^X= \sum_{i=1}^k a_i y_i e^{\tau/2} + O(e^{\frac{5\tau}{9}})
 \end{equation}
in $\mathcal{H}$-norm for all $\tau\leq \mathcal{T}(Z(X))$. This has been stated in \cite[Theorem 6.4]{DH_blowdown} under an additional noncollapsing assumption, but using the outer barriers from \cite[Section 2.2]{DuZhu} the proof goes through in the general case as well. 

\subsection{Merle-Zaag analysis for the difference function}

By a rigid motion and scaling we can assume that \eqref{eq_fine_neck} holds with $a=\sqrt{(n-k)/2}e_k$, namely
 \begin{equation}\label{eq_fine_neck_restated}
 \hat{u}^X= \sqrt{(n-k)/2} y_k e^{\tau/2} + O(e^{\frac{5\tau}{9}}).
 \end{equation}
We would like to compare this with the cylindrical function $\hat{v}^X=\hat{v}^{X,\eta}$ of the renormalized flow $\bar{N}_\tau^X=e^{\tau/2}(N_{t-e^{-\tau}}-x)$, where
\begin{equation}
N_t=\Sigma + (t+\eta) e_k,
\end{equation}
and $\Sigma$ denotes $\mathbb{R}^{k-1}$ times the $(n+1-k)$-dimensional bowl with tip at the origin, and the shift parameter $\eta\in \mathbb{R}$ will be chosen below. Since $N_t$ translates with unit speed in $e_k$-direction, we have the same expansion, i.e.
 \begin{equation}
 \hat{v}^X= \sqrt{(n-k)/2} y_k e^{\tau/2} + O(e^{\frac{5\tau}{9}}).
 \end{equation}
Throughout this subsection, we consider the difference function
\begin{equation}
w^X = \hat{u}^X-\hat{v}^X.
\end{equation}
Note that $w^X=o(e^{\tau/2})$, but our goal is to show that $w^X$ in fact decays much faster provided we choose the best fitting shift parameter $\eta$.

\begin{lemma}[evolution equation]\label{lemma_evol_difference} The difference function evolves by
\begin{equation}\label{eq_evol_difference}
w^X_\tau = \mathcal{L} w^X + E^X,
\end{equation}
where $\mathcal{L}=\Delta_\Gamma -\tfrac12 y \cdot \nabla +1$, and the error term satisfies the estimates
\begin{equation}
|\langle  w^X, E^X\rangle_\mathcal{H}| \leq Ce^{\tau/10}(\|  w^X\|_{\mathcal{H}}^2+ \|\nabla  w^X\|_{\mathcal{H}}^2)+ Ce^{100\tau},
\end{equation}
and
\begin{align}
\| P_+ E^X \|_{\mathcal{H}} \leq Ce^{\tau/10}\| w^X\|_{\mathcal{H}}+ Ce^{100\tau},
\end{align}
where $P_+:\mathcal{H}\to \mathcal{H}_+=\mathrm{span}\{1,y_1,\ldots,y_{n+1}\}$ is the projection to the unstable eigenfunctions of $\mathcal{L}$.
\end{lemma}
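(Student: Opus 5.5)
We compare the two graphical equations directly. Inside the graphical region, both $u^X$ and $v^X$ solve the renormalized mean curvature flow equation for graphs over $\Gamma$. This equation has the form
\[
f_\tau=\mathcal{L}f+\mathcal{N}(f,\nabla f,\nabla^2 f),
\]
where $\mathcal{N}$ is a smooth function of its arguments. It vanishes to second order at zero and is linear in $\nabla^2 f$: $\mathcal{N}=a^{ij}(f,\nabla f)\nabla_{ij}f+b(f,\nabla f)$, with $a^{ij}=O(|f|+|\nabla f|)$ and $b=O(|f|^2+|\nabla f|^2)$ (for $b$ one uses $|y|$-independent bounds after absorbing the $y$-dependence carefully, as in \cite{DuZhu}). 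For the truncated functions we write
\[
\hat{u}_\tau=\mathcal{L}\hat{u}+\chi\,\mathcal{N}(u)+E_{\mathrm{cut}}(u),
\]
where the cutoff error $E_{\mathrm{cut}}$ involves $u\,\partial_\tau\chi$, $u\,\Delta\chi$, $\nabla u\cdot\nabla\chi$ and $u\, y\cdot\nabla\chi$. It is supported in the annulus $\rho/2\le|y|\le\rho$, and by \eqref{eq_adm_rad} it is pointwise bounded by $C\rho^{C}$. Its $\mathcal{H}$-norm is therefore at most $C\rho^{C}e^{-\rho^2/32}\le Ce^{-e^{-\tau/5}/32}$, which is much smaller than $e^{200\tau}$ for $\tau\le\mathcal{T}$. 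The same holds for $\hat v$. Subtracting the two equations gives \eqref{eq_evol_difference} with
\[
E^X=\chi\bigl(\mathcal{N}(u)-\mathcal{N}(v)\bigr)+(\text{cutoff terms}).
\]
On the region $|y|\le\rho/2$ where $\chi\equiv1$, we have $u-v=w$ there.

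Next we linearize the difference. By the fundamental theorem of calculus,
\[
\mathcal{N}(u)-\mathcal{N}(v)=A^{ij}\nabla_{ij}w+B\cdot\nabla w+Dw,
\]
with $A^{ij},B,D=O(|u|+|v|+|\nabla u|+|\nabla v|+|\nabla^2u|+|\nabla^2 v|)$ on $B_\rho$, and the coefficients are smooth. Bounding these coefficients is the key point. Using \eqref{eq_adm_rad} at radius $2\rho$ alone only gives $C\rho^{-2}=Ce^{\tau/5}$, and since $\rho(\tau)=e^{-\tau/10}$ this bound is already of the required order $e^{\tau/10}$. Alternatively, the fast convergence together with interior estimates gives coefficient bounds of order $e^{\tau/2}(1+|y|)^{C}$ that remain small on $B_\rho$. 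In either case $|A|+|B|+|D|+|\nabla A|\le Ce^{\tau/10}$ on $\{\chi\neq0\}$. On the transition annulus, $\chi(\mathcal{N}(u)-\mathcal{N}(v))$ is not exactly expressible in terms of $w$. Its contribution is again bounded by the super-exponentially small Gaussian weight, so it goes into the $Ce^{100\tau}$ term.

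Now we estimate $\langle w,E^X\rangle_{\mathcal{H}}$. The terms $B\cdot\nabla w$ and $Dw$ are handled by Cauchy--Schwarz. For the second-order term we integrate by parts against the Gaussian weight:
\[
\int wA^{ij}\nabla_{ij}w\,e^{-|y|^2/4}=-\int \bigl(A^{ij}\nabla_iw\nabla_jw+w\nabla_iA^{ij}\nabla_jw-\tfrac12 wA^{ij}y_i\nabla_jw\bigr)e^{-|y|^2/4}.
\]
The troublesome piece is the factor $y_i$, which is unbounded. On $\mathrm{supp}\,\chi$ we have $|y|\le\rho$, so $|y||A|\le C\rho\,\rho^{-2}=Ce^{\tau/10}$. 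This is exactly why the graphical radius $\rho=e^{-\tau/10}$ matches the claimed rate, and it is the step I expect to need the most care. The main difficulty is controlling the second derivatives of both graphs uniformly on $B_\rho$ and ensuring that the boundary terms of the integration by parts land in the cutoff region. Combining the pieces yields the first estimate.

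For the projection estimate, $\mathcal{H}_+$ is spanned by finitely many polynomials of degree at most $1$. For each such $\psi$ we integrate by parts twice to move both derivatives off $w$:
\[
\langle\psi,A^{ij}\nabla_{ij}w\rangle=\int w\,\nabla_j\nabla_i(\psi A^{ij}e^{-|y|^2/4})e^{|y|^2/4}\,e^{-|y|^2/4}.
\]
The resulting weight is bounded by $C(1+|y|)^3(|A|+|\nabla A|+|\nabla^2A|)$. Bounding $\nabla^2A$ requires the $C^4$ control in \eqref{eq_adm_rad}, which is precisely why that norm appears there. On $B_\rho$ this gives a bound of the form $Ce^{\tau/10}$, where a polynomial factor of $\rho$ is absorbed by shrinking the exponent slightly if needed, or by using the better $e^{\tau/2}$ coefficient bounds. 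This gives $\|P_+E^X\|_{\mathcal{H}}\le Ce^{\tau/10}\|w\|_{\mathcal{H}}+Ce^{100\tau}$.
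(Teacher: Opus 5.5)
Your proposal follows the paper's route. The paper imports from \cite[Proposition A.1]{DH_shape} exactly the structure you derive by hand: $E^X=a_{ij}w^X_{ij}+b_iw^X_i+cw^X$ plus a term supported in $\{\rho/2\le|y|\le\rho\}$, with coefficients controlled through \eqref{eq_adm_rad}. It then concludes, as you do, by integration by parts and the super-exponential Gaussian smallness of the annulus. Your boundary terms at $|y|=\rho/2$ are harmless for the same reason: they are polynomially bounded quantities times $e^{-\rho^2/16}$. Your bound $|y||A|\le C\rho\cdot\rho^{-2}=Ce^{\tau/10}$ in the first estimate is also exactly right.

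The one step that fails as written is the end of the projection estimate. After integrating by parts twice you get $\int wFe^{-|y|^2/4}$ with $|F|\le C(1+|y|)^3(|A|+|\nabla A|+|\nabla^2A|)$. If you take the supremum of this weight over $B_\rho$, you get order $\rho^3\cdot\rho^{-2}=\rho=e^{-\tau/10}$, which grows as $\tau\to-\infty$. A factor $\rho^3$ against a decay of $\rho^{-2}$ cannot be absorbed by shrinking an exponent.

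The fix is to never take a supremum of the polynomial factor. Use Cauchy--Schwarz in $\mathcal{H}$ instead:
\[
\Bigl|\int wFe^{-|y|^2/4}\Bigr|\le \|F\|_{\mathcal{H}}\|w\|_{\mathcal{H}}\le C\,\|(1+|y|)^3\|_{\mathcal{H}}\,\sup_{B_\rho}\bigl(|A|+|\nabla A|+|\nabla^2A|\bigr)\,\|w\|_{\mathcal{H}}\le C\rho^{-2}\|w\|_{\mathcal{H}}.
\]
This works because polynomials have finite Gaussian norm. It is the point of the paper's remark that elements of the finite-dimensional space $\mathcal{H}_+$ grow at most like $1+|y|$. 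Handle the $B\cdot\nabla w$ and $Dw$ terms the same way (integrate by parts once, then apply Cauchy--Schwarz), and your argument proves the lemma.
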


\begin{proof}Using \cite[Proposition A.1]{DH_shape} and \eqref{eq_adm_rad}, we see that \eqref{eq_evol_difference} holds with
\begin{equation}
E^X=a_{ij} w^X_{ij}+b_i w^X_i+c w^X +d \cdot 1_{\{\rho/2\leq |y|\leq \rho\}},
\end{equation}
where the coefficient functions (which depend on $u^X$, $v^X$ and $\rho$) satisfy
\begin{equation}
\|a\|_{C^4(B_{2\rho}(0))}+\|b\|_{C^4(B_{2\rho}(0))}+\|c\|_{C^4(B_{2\rho}(0))}+\|d\|_{C^4(B_{2\rho}(0))}\leq C\rho^{-1}.
\end{equation}
Hence, remembering \eqref{eq_graph_rad} and using integration by parts we can estimate
\begin{equation}
|\langle  w^X,a_{ij} w^X_{ij}+b_i w^X_i+c w^X\rangle_\mathcal{H}| \leq Ce^{\tau/10}(\|  w^X\|_{\mathcal{H}}^2+ \|\nabla  w^X\|_{\mathcal{H}}^2),
\end{equation}
and
\begin{equation}
|\langle  w^X, d \cdot 1_{\{\rho/2\leq |y|\leq \rho\}}\rangle_\mathcal{H}| \leq C \int_{\{\rho/2\leq |y|\leq \rho\}}e^{-y^2/4}dV_\Gamma\leq Ce^{100\tau},
\end{equation}
which proves the first estimate. Taking into account that $P_{+}$ projects to a finite dimensional space, and observing also that $|P_+w^X|\leq C(1+|y|)$, the second estimate follows similarly.
\end{proof}

\begin{proposition}[improved decay]
 There is $\omega=\omega(X,\eta)\in \mathbb{R}$, such that for all $\tau\leq\mathcal{T}(Z(X))$ we have
\begin{equation}
\|e^{-\tau} w^X - \omega \|_{\mathcal{H}} \leq Ce^{\tau/10}.
\end{equation}
\end{proposition}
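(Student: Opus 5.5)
The plan is a Merle--Zaag mode analysis of $w^X$ based on Lemma \ref{lemma_evol_difference}, followed by a bootstrap on the unstable modes.

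\emph{Spectral decomposition.} I split $\mathcal{H}=\mathcal{H}_+\oplus\mathcal{H}_0\oplus\mathcal{H}_-$ according to the spectrum of $\mathcal{L}$:
\begin{itemize}
\item $\mathcal{H}_+$ contains the constant mode, with eigenvalue $1$, and the modes $y_i$ together with the first spherical harmonics, all with eigenvalue $1/2$;
\item $\mathcal{H}_0$ is the neutral space, with eigenvalue $0$;
\item on $\mathcal{H}_-$ the eigenvalues are at most $-1/2$.
\end{itemize}
Set $U_\pm=\|P_\pm w^X\|_{\mathcal{H}}^2$ and $U_0=\|P_0 w^X\|_{\mathcal{H}}^2$. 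Testing \eqref{eq_evol_difference} against these projections, the first error estimate of Lemma \ref{lemma_evol_difference} gives the usual differential inequalities. The gradient term is absorbed using $\|\nabla w\|^2\le \langle -\mathcal{L}w,w\rangle+C\|w\|^2$, which is the standard trick. The resulting inequalities are
\begin{align*}
U_+'&\ge U_+ - Ce^{\tau/10}(U_++U_0+U_-)-Ce^{100\tau},\\
|U_0'|&\le Ce^{\tau/10}(U_++U_0+U_-)+Ce^{100\tau},\\
U_-'&\le -U_- + Ce^{\tau/10}(U_++U_0+U_-)+Ce^{100\tau}.
\end{align*}

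\emph{Excluding neutral and stable dominance.} Merle--Zaag's lemma, applied as $\tau\to-\infty$ with the super-exponential term absorbed, says that one of the three quantities dominates the other two.
\begin{itemize}
\item If $U_-$ dominated, then $U_-$ would grow going backwards in time. This contradicts $w^X\to 0$.
\item If $U_0$ dominated, then $|U_0'|\le Ce^{\tau/10}U_0$. Thus $U_0$ would be bounded below by a positive constant for $\tau\to-\infty$, or at best decay subexponentially. This contradicts $w^X=o(e^{\tau/2})$.
\end{itemize}
Hence the unstable modes dominate:
$$U_0+U_-\le Ce^{\tau/10}U_+ \quad\text{(plus $Ce^{100\tau}$)},\qquad \|w^X\|\le C\|P_+w^X\|.$$
The one delicate point is the case where $w^X$ is super-exponentially small. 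In that case the conclusion holds trivially with $\omega=0$.

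\emph{Bootstrap on the coefficients.} Write $P_+w^X=c_0+\sum_j c_j\varphi_j$, where the $\varphi_j$ are the eigenvalue-$1/2$ modes. By the second estimate of Lemma \ref{lemma_evol_difference},
$$c_0'=c_0+O(e^{\tau/10}\|w\|+e^{100\tau}),\qquad c_j'=\tfrac12 c_j+O(e^{\tau/10}\|w\|+e^{100\tau}).$$
Suppose inductively that $\|w\|\le Ce^{\alpha\tau}$, starting from $\alpha=1/2$.
\begin{itemize}
\item For the $c_j$: since $c_j=o(e^{\tau/2})$, integrating $(e^{-\tau/2}c_j)'$ from $-\infty$ gives $c_j=O(e^{(\alpha+1/10)\tau})$.
\item For $c_0$: integrating $(e^{-\tau}c_0)'=O(e^{(\alpha-9/10)\tau})$ from a fixed time $\mathcal{T}$ down to $\tau$, with $\alpha<9/10$, gives $c_0=O(e^{\tau})+O(e^{(\alpha+1/10)\tau})$.
\end{itemize}
Using dominance again, $\|w\|\le Ce^{\min(1,\alpha+1/10)\tau}$. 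To avoid the borderline exponent $\alpha=9/10$, where a logarithm would appear, I perturb the step sizes slightly. After finitely many steps this yields $\|w^X\|\le Ce^{\tau}$.

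\emph{Conclusion.} With $\|w\|\le Ce^{\tau}$, the derivative $(e^{-\tau}c_0)'=O(e^{\tau/10})$ is integrable at $-\infty$. So $\omega:=\lim_{\tau\to-\infty}e^{-\tau}c_0(\tau)$ exists and
$$|e^{-\tau}c_0-\omega|\le Ce^{\tau/10}.$$
Integrating from $-\infty$ gives $c_j=O(e^{11\tau/10})$, and dominance gives $(U_0+U_-)^{1/2}\le Ce^{\tau/20}\|w\|\le Ce^{21\tau/20}$. Combining these three bounds,
$$\|e^{-\tau}w^X-\omega\|_{\mathcal{H}}\le Ce^{\tau/20}.$$
Redoing the dominance step more carefully, the Merle--Zaag lemma in fact gives $U_0+U_-\le Ce^{\tau/5}U_+$. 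This upgrades the estimate to the claimed $Ce^{\tau/10}$.

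The main obstacle is the dominance step: correctly ruling out neutral-mode dominance using only $w^X=o(e^{\tau/2})$, together with the handling of the gradient term in the error estimate.
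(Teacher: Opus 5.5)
Your argument is correct in substance but takes a different route from the paper.

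The paper weights by $e^{-\tau}$, setting $W_+=e^{-\tau}\|P_1w^X\|_{\mathcal{H}}^2$, $W_0=e^{-\tau}\|P_{1/2}w^X\|_{\mathcal{H}}^2$ and $W_-=e^{-\tau}\|(1-P_+)w^X\|_{\mathcal{H}}^2$. With this weighting the eigenvalue-$\tfrac12$ modes become the \emph{neutral} ones, and the eigenvalue-$0$ modes move into the stable part, controlled via the gradient term. A single Merle--Zaag application then leaves two options:
\begin{itemize}
\item $W_0$-dominance, which is excluded precisely by $w^X=o(e^{\tau/2})$, since it would force $e^{-\tau}\|w^X\|_{\mathcal{H}}^2\to K\neq 0$;
\item constant-mode dominance, after which one integration produces $\omega$.
\end{itemize}

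You instead use the unshifted trichotomy. There Merle--Zaag only rules out the eigenvalue-$0$ modes, which needs merely $w^X\to 0$. You then resolve the competition between the eigenvalue-$1$ and eigenvalue-$\tfrac12$ modes by hand, through the coefficient bootstrap using $c_j=o(e^{\tau/2})$. That bootstrap is sound, including your handling of the borderline exponent $9/10$. The paper's shift is shorter and only needs the bound on $P_+E^X$ from Lemma~\ref{lemma_evol_difference}. Your $U_0$-inequality additionally needs a bound on $\|P_0E^X\|_{\mathcal{H}}$. The lemma does not state this, although it follows by the same integration by parts against quadratic polynomials.

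Some details need repair.
\begin{itemize}
\item For $n-k\geq 3$ the eigenvalues on your $\mathcal{H}_-$ are not all $\leq -\tfrac12$: the second spherical harmonics have eigenvalue $-\tfrac{1}{n-k}$. This is harmless if you use the gap $\min\{\tfrac12,\tfrac1{n-k}\}$.
\item Observing that a super-exponentially small $w^X$ gives $\omega=0$ does not deal with the $Ce^{100\tau}$ inhomogeneity. You need a device that makes Merle--Zaag applicable. The clean one is the paper's: use $U_++e^{10\tau}$ in place of $U_+$.
\item Your last upgrade to $Ce^{\tau/10}$ is unsupported. The error bounds in Lemma~\ref{lemma_evol_difference} are $e^{\tau/10}$ times squared norms, so Merle--Zaag only gives $U_0+U_-\leq Ce^{\tau/10}(U_++e^{10\tau})$. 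That is an $e^{\tau/20}$ rate for the non-constant modes. An $e^{\tau/5}$ ratio would require a sharper bilinear error estimate.
\end{itemize}
On the last point, the paper's own dominance inequality $W_0+W_-\leq Ce^{\tau/10}(W_++e^{10\tau})$ likewise gives only $e^{\tau/20}$ for these modes. This is immaterial downstream: in Theorem~\ref{thm_rapid_decay}, for instance, one only needs $e^{-2\tau}\|w^X\|_{\mathcal{H}}^2\to 0$ once $\eta$ has been chosen to make $\omega=0$.
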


\begin{proof}Recall that the only unstable eigenfunctions of $\mathcal{L}$ are the constant function 1 with eigenvalue 1 and the linear functions $y_1,\ldots,y_{n+1}$ with eigenvalue 1/2. Correspondingly, we can decompose the projection operator $P_+:\mathcal{H}\to \mathcal{H}_+=\mathrm{span}\{1,y_1,\ldots,y_{n+1}\}$ as
\begin{equation}
P_{+}=P_{1}+P_{1/2},
\end{equation}
where $P_\lambda$ denotes the projection to the eigenspace with eigenvalue $\lambda$.
We aim to show that in the eigendecomposition of $w^X$ the term $P_{1/2} w^X$ cannot dominate for $\tau\to -\infty$.
To this end, we consider the function
\begin{equation}
W_0=e^{-\tau}\|P_{1/2} w^X\|_{\mathcal{H}}^2,
\end{equation}
as well as the functions
\begin{equation}
W_+=e^{-\tau}\|P_1 w^X\|_{\mathcal{H}}^2, \qquad W_-=e^{-\tau}\|(1-P_+) w^X\|_{\mathcal{H}}^2.
\end{equation}
Using the second estimate from Lemma \ref{lemma_evol_difference} we see that
\begin{align}
|\tfrac{d}{d\tau}W_0|= 2e^{-\tau}| \langle P_{1/2}w^X,P_{1/2}E^X\rangle_{\mathcal{H}} | \leq Ce^{\tau/10}W+Ce^{99\tau},
\end{align}
where we abbreviated $W=W_0+W_{+}+W_{-}$. Similarly, we obtain
\begin{equation}
|\tfrac{d}{d\tau}W_+- W_+|  \leq Ce^{\tau/10}W+Ce^{99\tau}.
\end{equation}
Moreover, using the first estimate from Lemma \ref{lemma_evol_difference}, and employing the good gradient term from $\langle \mathcal{L}  w^X, w^X\rangle_{\mathcal{H}} = \|  w^X\|_{\mathcal{H}}^2-\| \nabla w^X\|_{\mathcal{H}}^2$, we can estimate 
\begin{align}
\tfrac{d}{d\tau}W &\leq W - (2-Ce^{\tau/10})e^{-\tau}\|\nabla w^X\|_\mathcal{H}^2 +Ce^{\tau/10} W+Ce^{99\tau}\nonumber\\
 &\leq W_+-\tfrac{1}{2(n-k)}W_- +Ce^{\tau/10} W+Ce^{99\tau}.
\end{align}
Combining with the above inequalities yields
\begin{align}
\tfrac{d}{d\tau}W_- \leq-\tfrac{1}{2(n-k)}W_- +Ce^{\tau/10} W+Ce^{99\tau}.
\end{align}
We can thus apply the ODE-lemma from Merle-Zaag \cite{MZ} for the functions $W_-$, $W_0$ and $W_++e^{10\tau}$, which yields that for $\tau\to -\infty$ either
\begin{equation}\label{MZ_neutral_dom}
W_-+W_++e^{10\tau}=o(W_0),
\end{equation}
or
\begin{equation}\label{MZ_unstab_dom}
W_0+W_-\leq Ce^{\tau/10}(W_++e^{10\tau}).
\end{equation}
If \eqref{MZ_neutral_dom} held true, then we would have $W_0>0$ and $|\tfrac{d}{d\tau}\log W_0| \leq Ce^{\tau/10}$.
This would yield $e^{-\tau}\| w^X\|^2_{\mathcal{H}} \to K $ as $\tau\to-\infty$ for some $K\neq 0$, contradicting the fact that $\| w^X\|_{\mathcal{H}}=o(e^{\tau/2})$. Hence, \eqref{MZ_unstab_dom} holds, and we have
\begin{equation}
|\tfrac{d}{d\tau}(e^{-\tau}W_+)| \leq Ce^{\tau/10}(e^{-\tau}W_+)+Ce^{98\tau}.
\end{equation}
Integrating this differential inequality yields the assertion.
\end{proof}

\begin{corollary}[best fitting shift]\label{cor_best_shift}
There is a unique $\eta=\eta(X)\in \mathbb{R}$, such that for all $\tau\leq\mathcal{T}(Z(X))$ we have
\begin{equation}\label{eq:superfast_decay}
\|e^{-\tau} w^X\|_{\mathcal{H}} \leq Ce^{\tau/10}.
\end{equation}
\end{corollary}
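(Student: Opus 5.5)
The plan is to use the improved decay proposition and understand how the constant $\omega(X,\eta)$ depends on the shift parameter $\eta$. The idea is that varying $\eta$ perturbs $\hat v^X$, to leading order, by a multiple of the constant eigenfunction $1$ with growth rate $e^{\tau}$. Hence $\omega$ should be an affine function of $\eta$ with nonzero slope, and the desired $\eta$ is its unique zero.

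First I compute the effect of the shift. Replacing $\eta$ by $\eta+s$ replaces $N_t$ by $N_t+se_k$. This is the same as evaluating the translator at time $t+s$, i.e. centering at the space-time point $(x,t+s)$ instead of $(x,t)$. In renormalized variables, shifting the center in time by $s$ changes the rescaled hypersurface at time $\tau$ by a change of scale factor $e^{\tau/2}$ versus $(e^{-\tau}+s)^{-1/2}$. Linearizing in the cylinder coordinates, this is a radial perturbation of order $s e^{\tau}$ (plus a translation contribution $s e_k e^{\tau/2}$ that is absorbed in the $y_k$-term). More concretely, I would expand the bowl's cylindrical function: the cylindrical function of $\mathbb{R}^{k-1}\times\mathrm{Bowl}$ centered at a point at height $h$ along the axis gives $\hat v^{X,\eta}$ whose dependence on $\eta$ is
\begin{equation*}
\hat v^{X,\eta+s}-\hat v^{X,\eta} = c_n\, s\, e^{\tau} + O(e^{(1+1/10)\tau})
\end{equation*}
in $\mathcal{H}$-norm, with $c_n\neq 0$ an explicit constant depending only on $n,k$. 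For the shrinking cylinder, $\partial_t$ of the radius gives $c_n=\pm\sqrt{(n-k)/2}$. The sign and precise value are irrelevant, only $c_n\neq 0$ matters. This uses that the translator's profile is smooth and exactly computable, so the expansion (including the truncation error, which is $O(e^{100\tau})$) is elementary.

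Consequently $w^{X,\eta+s}=w^{X,\eta}-c_n s e^{\tau}+O(e^{11\tau/10})$. Dividing by $e^\tau$ and comparing with the improved decay proposition gives $\omega(X,\eta+s)=\omega(X,\eta)-c_n s$, because limits of $e^{-\tau}w$ in $\mathcal{H}$ are unique as $\tau\to-\infty$. Choosing $s=\omega(X,\eta)/c_n$ makes $\omega=0$, and the proposition then yields \eqref{eq:superfast_decay}. For uniqueness: if two shifts $\eta\neq\eta'$ both satisfied the estimate, subtracting would give $\|c_n(\eta-\eta')\|_{\mathcal{H}}\leq Ce^{\tau/10}\to 0$, which is a contradiction. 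The constant $C$ must remain uniform. The proposition's constant depends on bounds on $\eta$ but is otherwise controlled, so for the chosen $\eta$ one just reruns the integration.

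The main obstacle is justifying the first-order expansion in $s$ with an error of size $o(e^\tau)$ in $\mathcal{H}$, uniformly for $\tau\leq\mathcal{T}(Z(X))$. I would do this directly from the explicit bowl asymptotics (the rotationally symmetric profile $r^2/(2(n-k))\approx$ height, with lower-order $\log$ corrections). Alternatively, I could note that $\partial_\eta \hat v^{X,\eta}$ solves the linearized equation $\partial_\tau=\mathcal{L}+\text{small}$, and apply the same Merle–Zaag argument to it to identify its leading mode as the constant eigenfunction with eigenvalue $1$.
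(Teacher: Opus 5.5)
Your proposal is correct and is essentially the paper's argument. The paper uses the bowl asymptotics $r'(h)=\sqrt{(n-k)/(2h)}+O(h^{-1})$ to get $\hat v^{X,\eta}=\hat v^{X,0}-\sqrt{(n-k)/2}\,e^{\tau}\eta+O(e^{3\tau/2})$, so $\omega(X,\eta)$ is affine in $\eta$ with nonzero slope and its unique zero is the desired shift. One small clarification of your heuristic: at fixed $\tau$ the shift is a pure translation of $\bar N^X_\tau$ by $s e^{\tau/2}e_k$. The constant mode $-\sqrt{(n-k)/2}\,s e^{\tau}$ comes precisely from this translation acting on the slope $\partial_{y_k}\hat v\approx\sqrt{(n-k)/2}\,e^{\tau/2}$, so it is not ``absorbed in the $y_k$-term''. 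Equivalently, you can view the shift as a rescaling at a shifted time, but you should not count both contributions.
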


\begin{proof}
For the bowl soliton the radius as a function of the height satisfies
\begin{equation}
r'(h)=\sqrt{(n-k)/(2h)} +O(h^{-1}).
\end{equation}
This yields
\begin{equation}
\hat{v}^{X,\eta} = \hat{v}^{X,0}-\sqrt{(n-k)/2}e^\tau \eta + O(e^{3\tau/2}),
\end{equation}
and thus implies the assertion.
\end{proof}

\begin{theorem}[rapid decay]\label{thm_rapid_decay}
If $\eta$ is chosen such that \eqref{eq:superfast_decay} holds, then for all $\tau\leq\mathcal{T}(Z(X))$ we have
\begin{equation}
\|  \hat w^X\|_{\mathcal{H}}\leq Ce^{50\tau}.
\end{equation}
\end{theorem}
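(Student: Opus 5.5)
Once the shift $\eta$ has been fixed as in Corollary \ref{cor_best_shift}, I would run the Merle-Zaag analysis again, this time with a finer spectral splitting. The goal is to show that no eigenmode of $\mathcal{L}$ can dominate $w^X$ as $\tau\to-\infty$. The error terms are only $e^{100\tau}$, so once every mode has been excluded the decay is limited only by that error, which gives $e^{50\tau}$. By Corollary \ref{cor_best_shift} we already know $\|w^X\|_{\mathcal{H}}\le Ce^{11\tau/10}$, so every mode with eigenvalue $\lambda\ge 1$ is ruled out from the outset. The task is to push past all the remaining (lower) eigenvalues in turn.

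I would argue by iteration in $\mu$. Suppose we know $\|w^X\|_{\mathcal{H}}\le C_\mu e^{\mu\tau}$ for some $\mu<50$; we want to upgrade this to $e^{(\mu+\delta)\tau}$ for a fixed $\delta>0$ (for instance $\delta=1/20$). Set $\lambda_*=-\mu$ and split $\mathcal{H}$ into three pieces:
\begin{itemize}
\item $P_{>}$, the modes with eigenvalue $>\lambda_*+\delta/2$;
\item $P_{0}$, the finitely many modes with eigenvalue in $[\lambda_*-\delta/2,\lambda_*+\delta/2]$ (the spectrum of $\mathcal{L}$ on $\Gamma$ is discrete, $\lambda=1-\tfrac{j}{2}-\tfrac{\ell(\ell+n-k-1)}{2(n-k)}$);
\item $P_{<}$, the rest.
\end{itemize}
Put $W_\bullet=e^{-2\lambda_*\tau}\|P_\bullet w^X\|^2_{\mathcal{H}}$. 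The estimates of Lemma \ref{lemma_evol_difference}, together with the gradient term from $\langle\mathcal{L}w,w\rangle=\|w\|^2-\|\nabla w\|^2$ used as in the improved decay proposition, should give ODIs with gaps $\gtrsim\delta$, errors $Ce^{\tau/10}W$, and an additive term $Ce^{(100+2\mu)\tau}$, which is negligible. Here I expect the main obstacle. The bound $\|P_+E^X\|\le\dots$ in Lemma \ref{lemma_evol_difference} only covers the finite-dimensional unstable part, so I need the analogous estimate for each finite-dimensional projection $P_0$, and also the "middle" high-mode estimate for $P_>$ obtained by subtracting from the full energy identity. For finitely many polynomial eigenfunctions this works exactly as in the lemma, since $|P_0w|\le C(1+|y|)^{m}$ and the $C^4$ bounds on the coefficients are $\le C\rho^{-1}=Ce^{\tau/10}$. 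Because $P_>$ is infinite-dimensional, I would handle it instead through the difference $W-W_0-W_<$.

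Merle-Zaag then yields one of two alternatives. If the $P_>$ part dominates, $w^X$ would grow relative to $e^{(\lambda_*+\delta/2)\tau}$ as $\tau\to-\infty$, which contradicts the a priori bound; more precisely, the $P_>$ alternative forces $\|w\|\gtrsim e^{\lambda'\tau}$ with $\lambda'>\lambda_*$, i.e. slower decay than $e^{\mu\tau}$, a contradiction. If the neutral part $P_0$ dominates, then $|\frac{d}{d\tau}\log W_0|\le C\delta+Ce^{\tau/10}$. For an exact eigenvalue $\lambda_*$ this gives $e^{-2\lambda_*\tau}\|w\|^2\to K\ne0$. To exclude this I would use the defining property of $\mathcal{M}$ together with a geometric input: a leading mode $e^{\lambda\tau}\varphi$ in the difference of the two flows recentered at every $X$ must be compatible with recentering. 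Recentering at $X'=X+(0,s)$ maps the mode amplitude by $e^{\lambda\tau}\mapsto (1+se^{\tau})^{\dots}$, and this is consistent for all $X$ only if the coefficient vanishes.

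Honestly, that last step is delicate. The simpler route I would try first avoids it by choosing the splitting so that no eigenvalue lies in the neutral window, i.e. taking $\lambda_*$ in a spectral gap. The dichotomy then reads: either $P_>$ dominates, which is impossible, or $W_0+W_>\le Ce^{\tau/10}W_<$, so the stable part dominates and $W_<$ decays with a definite rate. The latter gives $\|w\|\le Ce^{(\mu+\delta)\tau}$ up to the $e^{100\tau}$ error. Iterating across successive gaps finitely many times reaches $e^{50\tau}$. All constants are uniform for $\tau\le\mathcal{T}(Z(X))$, since the only inputs are Lemma \ref{lemma_evol_difference} and \eqref{eq:superfast_decay}. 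If crossing an eigenvalue turns out to genuinely require excluding the neutral case, I would fall back on the recentering argument just described.
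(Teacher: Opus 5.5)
\paragraph{There is a genuine gap: the scheme inverts which modes survive as $\tau\to-\infty$.}
As $\tau\to-\infty$, a mode $e^{\lambda\tau}\varphi$ with $\mathcal{L}\varphi=\lambda\varphi$ is \emph{larger} the smaller $\lambda$ is. The bound $\|w^X\|_{\mathcal{H}}\le Ce^{11\tau/10}$ from Corollary~\ref{cor_best_shift} therefore does more than rule out the modes with $\lambda\ge 1$. Since every eigenvalue of $\mathcal{L}$ is at most $1<11/10$, it rules out all of them at once, and there are no lower eigenvalues left to ``push past''.

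The iteration rests on a sign error. A decay rate $e^{\mu\tau}$ corresponds to the normalization $\lambda_*=+\mu$, not $-\mu$. Your weight $e^{-2\lambda_*\tau}=e^{2\mu\tau}$ gives $W\sim e^{4\mu\tau}$ rather than normalizing anything. The same confusion reverses your Merle--Zaag dichotomy:
\begin{itemize}
\item For $\tau\to-\infty$ it is the stable part that can never dominate, since it would blow up backwards in time. The two genuine alternatives are neutral dominance or unstable ($P_>$) dominance.
\item Your first reason for excluding $P_>$-dominance is backwards. Modes with eigenvalue above $\lambda_*+\delta/2$ are \emph{smaller} than $e^{(\lambda_*+\delta/2)\tau}$ at $-\infty$, not larger.
\item The lower bound $\|w^X\|_{\mathcal{H}}\gtrsim e^{\lambda'\tau}$ you then invoke needs an upper bound on the growth rate of $P_>w^X$. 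That is, it needs the fact that no eigenvalue exceeds $1$, and this fact alone already finishes the proof.
\item The final step, from ``$W_<$ decays at a definite rate'' to $\|w^X\|_{\mathcal{H}}\le Ce^{(\mu+\delta)\tau}$, does not follow from $W_<=e^{2\mu\tau}\|P_<w^X\|^2_{\mathcal{H}}$.
\end{itemize}
The recentering fallback for the neutral case is neither needed nor worked out.

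\paragraph{The missing observation.}
Because $1$ is the top eigenvalue, no spectral splitting is needed at all. Use Lemma~\ref{lemma_evol_difference} together with $\langle\mathcal{L}w^X,w^X\rangle_{\mathcal{H}}=\|w^X\|^2_{\mathcal{H}}-\|\nabla w^X\|^2_{\mathcal{H}}$. The gradient error $Ce^{\tau/10}\|\nabla w^X\|^2_{\mathcal{H}}$ is absorbed by the good term $-2\|\nabla w^X\|^2_{\mathcal{H}}$, giving
\[
\tfrac{d}{d\tau}\|w^X\|_{\mathcal{H}}^2\le (2+Ce^{\tau/10})\|w^X\|^2_{\mathcal{H}}+Ce^{100\tau}.
\]
Multiply by the integrating factor $\exp(-2\tau-10Ce^{\tau/10})$ and integrate from $\sigma$ to $\tau$. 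The boundary term is at most $e^{-2\sigma}\|w^X(\cdot,\sigma)\|^2_{\mathcal{H}}\le Ce^{\sigma/5}$ by \eqref{eq:superfast_decay}, so it vanishes as $\sigma\to-\infty$. This leaves $\|w^X\|^2_{\mathcal{H}}\le Ce^{100\tau}$, which is the paper's proof.

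Your intuition that the decay is limited only by the $e^{100\tau}$ error is correct. With the corrected normalization $\lambda_*=\mu\ge 11/10$, your splitting has $P_>=P_0=0$ and collapses to exactly this one-line estimate.
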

 
\begin{proof}
Using Lemma \ref{lemma_evol_difference} we see that
\begin{equation}
\frac{d}{d\tau}\| w^X\|_{\mathcal{H}}^2 \leq 2\langle \mathcal{L}  w^X, w^X\rangle_{\mathcal{H}}+Ce^{\tau/10}(\|  w^X\|_{\mathcal{H}}^2+\| \nabla w^X\|_{\mathcal{H}}^2) +Ce^{100\tau}.
\end{equation}
Together with $\langle \mathcal{L}  w^X, w^X\rangle_{\mathcal{H}} = \|  w^X\|_{\mathcal{H}}^2-\| \nabla w^X\|_{\mathcal{H}}^2$ this yields
\begin{equation}
\frac{d}{d\tau}\| w^X\|_{\mathcal{H}}^2 \leq (2+Ce^{\tau/10})\|  w^X\|_{\mathcal{H}}^2+Ce^{100\tau}.
\end{equation}
Since $e^{-2.1\tau}\| w^X\|_{\mathcal{H}}^2\to 0$ as $\tau\to -\infty$ thanks to Corollary \ref{cor_best_shift}, integrating this differential inequality yields the assertion.
\end{proof} 

\begin{corollary}[universal shift parameter]\label{cor_universal_shift}
There exists $\bar \eta\in\mathbb{R}$, such that 
\begin{equation}
\eta(X)=\bar \eta
\end{equation}
for any space-time point $X=(x,t)$ with $x=(x_1,\ldots,x_k,0\ldots, 0)$.
\end{corollary}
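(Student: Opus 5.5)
Comparing two space-time points $X$ and $X'$ with their respective best shifts $\eta(X)$ and $\eta(X')$ should force the two shifts to coincide. The key observation is that $\eta(X)$ is determined by the single unstable mode $P_1$ (the constant mode) of $\hat u^X-\hat v^{X,\eta}$ at order $e^{\tau}$. By Corollary \ref{cor_best_shift}, $\eta(X)$ is the unique value for which the coefficient $\omega(X,\eta)$ from the improved decay proposition vanishes. Moreover, $\omega(X,\eta)$ depends affinely on $\eta$ with slope $\sqrt{(n-k)/2}$, by the expansion $\hat v^{X,\eta}=\hat v^{X,0}-\sqrt{(n-k)/2}e^\tau\eta+O(e^{3\tau/2})$.

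To relate different base points, I would use a continuity/local-constancy argument rather than a direct computation. Fix $X$ and take $X'=(x',t')$ with $x'$ in the $\mathbb{R}^k$-plane, close to $X$. The renormalized flows centered at $X$ and $X'$ are related by
\[
\bar M^{X'}_\tau = e^{\tau/2}\bigl(M_{t'-e^{-\tau}}-x'\bigr),
\]
which is $\bar M^{X}_{\tau'}$ rescaled, with $\tau'=-\log(e^{-\tau}+t-t')$, and shifted by $e^{\tau/2}(x-x')$. The exact same transformation relates $\bar N^{X'}_\tau$ to $\bar N^X_{\tau'}$, because $N$ is a genuine flow with the same parameters (same $\eta$). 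Hence $\hat u^{X'}-\hat v^{X',\eta}$ is obtained from $\hat u^{X}-\hat v^{X,\eta}$ by this change of center and time. For $\tau\to-\infty$ we have $\tau'-\tau=O(e^{\tau})$ and a spatial shift of size $O(e^{\tau/2})$.

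Now apply Theorem \ref{thm_rapid_decay} at $X$ with $\eta=\eta(X)$, which gives $\|w^X\|_{\mathcal H}\le Ce^{50\tau}$. Since both surfaces are graphs over $\Gamma\cap B_{2\rho}$ with the $C^4$ bounds \eqref{eq_adm_rad}, the difference of their graph functions is small in a way that is stable under small recentering. Via interior parabolic estimates, the $\mathcal H$-smallness upgrades to pointwise smallness on, say, $B_{\rho/2}$, which I would argue first. After recentering to $X'$ and intersecting with a slightly smaller ball, the difference $w^{X',\eta(X)}$ is $O(e^{c\tau})$ on a large ball for some $c>1$, say $c=10$. Outside that ball, the Gaussian weight together with the cutoff gives errors like $e^{-\rho^2/C}$, which are superexponentially small. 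Hence $\|e^{-\tau}w^{X',\eta(X)}\|_{\mathcal H}\le Ce^{\tau/10}$, and by the uniqueness in Corollary \ref{cor_best_shift}, $\eta(X')=\eta(X)$.

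This shows $\eta$ is locally constant along the connected set $\{(x,t): x\in\mathbb{R}^k\times\{0\}\}$, hence constant. Alternatively, one can take $X'$ arbitrary directly, since the recentering shift $e^{\tau/2}(x-x')$ and the time change still tend to zero as $\tau\to-\infty$, and the constants $C$ may depend on $X,X'$. The main obstacle is transferring the $\mathcal H$-norm bound across centers. Gaussian weights at different centers are comparable only on balls of radius $\ll e^{-\tau/2}$, and the $\mathcal H$ bound must be converted into a pointwise one. I would handle this with standard local $L^2\to L^\infty$ parabolic estimates for the linear equation \eqref{eq_evol_difference} on $B_{\rho}$, which lose only polynomial factors in $\rho=e^{-\tau/10}$. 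These factors are harmless against $e^{50\tau}$.
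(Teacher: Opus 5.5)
Your strategy is the same as the paper's, whose proof is only a two-line sketch. You transport the rapid decay of $\mathcal M$ towards $\mathcal N^{\eta(X)}$ from the center $X$ to a second center $X'$ via $\bar M^{X'}_\tau=\lambda\bar M^{X}_{\tau'}+b$, with $\lambda^2=1+(t-t')e^{\tau}$ and $b=e^{\tau/2}(x-x')$ (and identically for $\mathcal N$). You then invoke the uniqueness in Corollary \ref{cor_best_shift}.

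However, your execution of the transfer step, which you call the main obstacle, contains a false claim. Local $L^2\to L^\infty$ estimates do \emph{not} upgrade $\|w^X\|_{\mathcal H}\le Ce^{50\tau}$ to pointwise smallness on $B_{\rho/2}$ with only polynomial losses in $\rho$. The Gaussian norm controls the unweighted $L^2$-norm on a unit ball centered at $|y_0|=R$ only up to a factor $e^{(R+1)^2/8}$. For $R\sim\rho=e^{-\tau/10}$ this factor is of order $\exp(c\,e^{-\tau/5})$, which swamps $e^{50\tau}$. The polynomial loss from the drift term is real, but it is negligible next to this. So, as written, the claimed bound $w^{X',\eta(X)}=O(e^{10\tau})$ on a large ball does not follow.

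The pointwise detour is unnecessary, and your own remark explains why. The Gaussian weights at the two centers are comparable on balls of radius $\ll e^{-\tau/2}$, and $2\rho=2e^{-\tau/10}$ is such a radius. Indeed, on $B_{2\rho}$ one has $|\lambda y+b|^2-|y|^2=O(e^{2\tau/5})$, so the weights agree up to a factor $1+O(e^{2\tau/5})$. Moreover, the recentering is a dilation about the axis composed with an axial translation, so the difference of the untruncated graph functions transforms exactly:
\[
(u^{X'}-v^{X'})(y',\omega,\tau)=\lambda\,(u^{X}-v^{X})\bigl((y'-b)/\lambda,\omega,\tau'\bigr).
\]
A change of variables therefore gives $\|w^{X'}(\cdot,\tau)\|_{\mathcal H}\le (1+o(1))\|w^{X}(\cdot,\tau')\|_{\mathcal H}+(\textrm{cutoff terms})$. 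The cutoff terms live on the annuli $|y|\gtrsim\rho/2$ where the two truncations differ. There $|u|,|v|\le C\rho^{-2}$ by \eqref{eq_adm_rad}, and the weight is at most $e^{-\rho^2/20}$, so these terms are superexponentially small.

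Pointwise estimates could also be salvaged by using them only on $B_{10\sqrt{|\tau|}}$, where the weight loss is $e^{O(|\tau|)}$ and is absorbed by $e^{50\tau}$, with the weight handling the complement. That is more work than needed.

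With either replacement you obtain $\|e^{-\tau}w^{X',\eta(X)}\|_{\mathcal H}\to 0$, and uniqueness gives $\eta(X')=\eta(X)$ directly for arbitrary $X'$. The local-constancy step is then not needed.
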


\begin{proof}
Note that the assertion clearly holds in the special case when $M_t$ is $\mathbb{R}^{k-1}$ times a bowl. Since by Theorem \ref{thm_rapid_decay} we have rapid decay to this model situation it follows that the assertion holds in the general case as well.
\end{proof}

 \subsection{From rapid decay to uniqueness}
After a shift in $e_k$-direction we can assume that the constant from Corollary \ref{cor_universal_shift} vanishes, namely $\bar \eta=0$, so $\mathcal{N}=\{\Sigma+te_k\}_{t\in\mathbb{R}}$. Then, by Theorem \ref{thm_rapid_decay} the difference function $w^X$ between the truncated cylindrical functions of $\mathcal{M}$ and $\mathcal{N}$ centered at any $X=(x,t)$ with $x=(x_1,\ldots,x_k,0,\ldots,0)$ for all  $\tau\leq\mathcal{T}(Z(X))$ satisfies
\begin{equation}\label{dist_est_transl}
\|   w^X\|_{\mathcal{H}}\leq Ce^{50\tau}.
\end{equation}
To properly exploit this, we need to control the cylindrical scale:

\begin{lemma}[cylindrical scale]\label{lemma_cyl_sc}
There exist $h_*=h_*(\mathcal{M}) < \infty$ and $C=C(\mathcal{M})<\infty$, such that all $X=(x,t)$ with $x=(x_1,\cdots,x_k,0\cdots,0)$, whenever $x_k-t \geq h_*$, then $Z(X)\leq C (x_k-t)^{1/2}$.
\end{lemma}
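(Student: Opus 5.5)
We argue by contradiction combined with a compactness/blowdown argument, using that the flow $\mathcal{M}$ is asymptotic, for $t\to-\infty$, to the translating model $\mathcal{N}$ near the relevant points. Suppose there are points $X_j=(x_j,t_j)$ with $x_j=(x_{j,1},\ldots,x_{j,k},0,\ldots,0)$, $h_j:=x_{j,k}-t_j\to\infty$, and $Z(X_j)/h_j^{1/2}\to\infty$. We consider the parabolically rescaled flows $\mathcal{M}^j=\mathcal{D}_{h_j^{-1/2}}(\mathcal{M}-X_j)$ and aim to show that they converge to a round shrinking cylinder $\mathbb{R}^k\times S^{n-k}$ on the unit scale. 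That would force $Z(X_j)\leq C h_j^{1/2}$, a contradiction.

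The key geometric input is the structure of $\mathcal{N}=\{\Sigma+te_k\}$. For the model flow, the point $X$ lies at height $x_k-t=h$ above the tip of the translating bowl at time $t$. By the asymptotics $r'(h)\approx\sqrt{(n-k)/(2h)}$ used in Corollary \ref{cor_best_shift}, the neck there has radius $r\approx\sqrt{2(n-k)h}$. Looking backwards in time by $|\Delta t|\sim h$, the bowl moves down by a comparable amount, so on the parabolic scale $h^{1/2}$ around $X$ the model flow is $\eps_0$-close to the round cylinder $\mathbb{R}^k\times S^{n-k}$ with radius $\sqrt{2(n-k)|t-t_X|}$, uniformly once $h\geq h_*$. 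In other words, $Z_{\mathcal{N}}(X)\leq C h^{1/2}$. One checks this directly from the explicit bowl profile; after rescaling by $h^{-1/2}$, the bowl converges to the cylinder on compact sets.

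To transfer this to $\mathcal{M}$ we use \eqref{dist_est_transl}. We pick a scale $\tau_0$ with $e^{-\tau_0/2}$ comparable to a fixed large multiple of $h^{1/2}$ that is still $\leq$ the cylindrical scale. More precisely, we use \eqref{dist_est_transl} at the largest $\tau$ allowed, namely $\tau=\mathcal{T}(Z(X))$, or at any $\tau$ below it. For $\tau$ in the range $-\log(C h)\le \tau\le \mathcal{T}(Z(X))$, \eqref{dist_est_transl} gives $\|w^X\|_{\mathcal{H}}\le Ce^{50\tau}$, which is tiny. Hence $\bar M^X_\tau$ and $\bar N^X_\tau$ are $L^2$-close on $B_{\rho}$. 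Since both are smooth graphs with the $C^4$ bounds \eqref{eq_adm_rad}, interpolation upgrades this to $C^{\lfloor 1/\eps_0\rfloor}$-closeness on $B(0,2/\eps_0)$. Here $\bar N^X_\tau$ is close to the round cylinder as long as $e^{-\tau}\lesssim h$. Choosing $\tau$ with $e^{-\tau}=\delta h$ for small $\delta$, we thus get that $\mathcal{M}$ is $\eps_0$-cylindrical at scale $(\delta h)^{1/2}$ around $X$, i.e. $Z(X)\leq C h^{1/2}$. This requires $\tau=-\log(\delta h)\leq \mathcal{T}(Z(X))$. If it fails, then $Z(X)$ is already smaller than $C h^{1/2}$ and we are done, so there is a dichotomy and no contradiction argument is needed.

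The main obstacle is the constant $C$ in \eqref{dist_est_transl} and in Lemma \ref{lemma_evol_difference}: it must be uniform in $X$. The Merle-Zaag constants depend only on the fine-neck expansion \eqref{eq_fine_neck}, whose error $O(e^{5\tau/9})$ must be uniform in $X$ for $\tau\leq\mathcal{T}(Z(X))$. We would verify that the cited \cite[Theorem 6.4]{DH_blowdown} gives this uniformity. We would also check that the upgrade from $\mathcal{H}$-closeness to smooth closeness uses only the uniform graphical radius bound \eqref{eq_adm_rad} centered at $X$. That bound again holds for $\tau\leq\mathcal{T}(Z(X))$, so the dichotomy above closes the argument.
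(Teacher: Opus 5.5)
There is a genuine gap at the step where you invoke \eqref{dist_est_transl} at $\tau=-\log(\delta h)$: your dichotomy runs in the wrong direction. The decay estimate, like the fine expansion and the graphical radius bound \eqref{eq_adm_rad}, holds only for $\tau\le\mathcal{T}(Z(X))$, i.e. at scales $e^{-\tau/2}\gtrsim Z(X)$. Since $\mathcal{T}(z)$ decreases as $z$ grows, the requirement $-\log(\delta h)\le\mathcal{T}(Z(X))$ holds only when $Z(X)\lesssim(\delta h)^{1/2}$, that is, only when the conclusion of the lemma is already true.

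When the requirement fails, $Z(X)$ is \emph{larger} than $Ch^{1/2}$, not smaller, and that is exactly the case you must rule out. In that case you have no information about $\mathcal{M}$ at scale $h^{1/2}$ around $X$, nor at the intermediate scales between $h^{1/2}$ and $Z(X)$. So nothing shows that your blow-ups $\mathcal{D}_{h_j^{-1/2}}(\mathcal{M}-X_j)$ converge to a cylinder, and the argument is circular. Your concerns about uniformity of constants and about upgrading to $C^{\lfloor 1/\eps_0\rfloor}$-closeness are secondary to this.

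The missing idea is to blow up at the scale where the estimates are available, namely $Z(X_i)$ itself, and to use the hypothesis $\sqrt{h_i}/Z(X_i)\to 0$, which your argument never exploits. The paper considers $\mathcal{M}^i=\mathcal{D}_{Z(X_i)^{-1}}(\mathcal{M}-X_i)$ and $\mathcal{N}^i=\mathcal{D}_{Z(X_i)^{-1}}(\mathcal{N}-X_i)$.

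\begin{enumerate}
\item By the fine expansion, $\mathcal{M}^i$ subconverges to a round shrinking cylinder with axis through the origin.
\item Because $\sqrt{h_i}\ll Z(X_i)$, the tip of the bowl is invisible at this scale. Hence $\mathcal{N}^i$ converges to the round cylinder through the origin that becomes extinct exactly at time $0$.
\item The decay estimate \eqref{dist_est_transl}, now used at scales where it is valid, forces the limit of $\mathcal{M}^i$ to become extinct at time $0$ as well.
\item This contradicts the normalization $Z(0)=1$ of the rescaled flows.
\end{enumerate}

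So the contradiction comes from comparing $\mathcal{M}$ with $\mathcal{N}$ at the cylindrical scale $Z(X)$, not from transporting closeness down to scale $\sqrt{h}$.
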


\begin{proof}
Suppose towards a contradiction there are such $X_i=(x_i,t_i)$ with $h_i=(x_i)_k-t_i\to \infty$ and $Z(X_i)^2/h_i\to \infty$.
Then, by the fine cylindrical expansion \eqref{eq_fine_neck}, the rescaled flows $\mathcal{M}^i=\mathcal{D}_{Z(X_i)^{-1}}(\mathcal{M}-X_i)$ converge to a round shrinking cylinder $\mathcal{M}^\infty$ with axis through the origin. However, since $Z(X_i)\to \infty$ and $\sqrt{h_i}/Z(X_i)\to 0$, our rescaled model solutions $\mathcal{N}^i=\mathcal{D}_{Z(X_i)^{-1}}(\mathcal{N}-X_i)$ converge to a round shrinking cylinder with axis through the origin that becomes extinct at time zero. Hence, by the decay estimate \eqref{dist_est_transl}, the limit shrinking cylinder $\mathcal{M}^\infty$ becomes extinct at time zero as well, which contradicts the fact that it has $Z(0)=1$. This proves the lemma.
\end{proof}

\begin{corollary}[Hausdorff distance]\label{cor_hausdorff_distance}There exists $C_\ast=C_\ast(\mathcal{M})<\infty$, such that the level sets 
$M_t^h=M_t\cap \{x_k=h\}$ and $N_t^h=N_t\cap \{x_k=h\}$ satisfy
\begin{equation}\label{eq:Hausd_decay}
d_{\mathrm{Hausdorff}}(M_t^h,N_t^h) \leq (h-t)^{-20}\,\,\, \textrm{ whenever } h-t\geq C_\ast.
\end{equation}
\end{corollary}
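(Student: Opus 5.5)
We will derive the Hausdorff estimate by applying the decay \eqref{dist_est_transl} at a well-chosen center and scale, and then upgrading Gaussian $L^2$ closeness to pointwise closeness via interior estimates. Fix $t$ and $h$ with $h-t\geq C_\ast$. Given a point $p\in N_t^h$, write $p=(x_1,\ldots,x_{k-1},h,\bar p)$ with $\bar p\in\mathbb{R}^{n+1-k}$. We take as center the space-time point $X=(x,t')$ with $x=(x_1,\ldots,x_{k-1},h,0,\ldots,0)$ and $t'=t+\lambda(h-t)$, where $\lambda>0$ is a small fixed constant. With this choice $x_k-t'=(1-\lambda)(h-t)\geq h_*$ once $C_\ast$ is large. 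Lemma \ref{lemma_cyl_sc} then gives $Z(X)\leq C(h-t)^{1/2}$, so \eqref{dist_est_transl} is available for all $\tau\leq \mathcal{T}(Z(X))$, i.e. on scales $e^{-\tau/2}\gtrsim Z(X)$.

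We now evaluate at the renormalized time $\tau_0=-\log(t'-t)=-\log(\lambda(h-t))$. Choosing $\lambda$ large compared to the implicit constants, but still $\lambda<1$, makes $\tau_0\leq\mathcal{T}(Z(X))$. At this time both $\bar M^X_{\tau_0}$ and $\bar N^X_{\tau_0}$ are, near the origin, graphs over $\Gamma$ with $C^4$-norm controlled by \eqref{eq_adm_rad}. This holds for $\mathcal{N}$ by the explicit bowl asymptotics, and for $\mathcal{M}$ by the admissible radius applied at center $X$. The point $p$ rescales to $e^{\tau_0/2}(p-x)=e^{\tau_0/2}(0,\ldots,0,\bar p)$. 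Since $|\bar p|\sim \sqrt{2(n-k)(h-t)}$ by the bowl asymptotics, this rescaled point lies at bounded distance from the origin, on the cross-section $\{y=0\}$ of $\Gamma$.

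The key step converts $\|w^X(\cdot,\tau_0)\|_{\mathcal{H}}\leq Ce^{50\tau_0}$ into a pointwise bound on $|w^X|$ on a fixed ball, say $\Gamma\cap B_{10}$. Here the Gaussian weight is bounded below by a constant, so the local $L^2$ norm is at most $Ce^{50\tau_0}$. We then interpolate with the uniform $C^4$ bounds on $u^X$ and $v^X$, for instance via Gagliardo--Nirenberg. This gives $\|w^X(\cdot,\tau_0)\|_{C^0(B_{10})}\leq Ce^{40\tau_0}$, at the cost of some exponent, which is harmless. Alternatively, we can use standard parabolic interior estimates for the linear equation \eqref{eq_evol_difference} on $[\tau_0-1,\tau_0]$. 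In either case the two renormalized hypersurfaces are normal graphs of one another over $\Gamma\cap B_{10}$ with distance at most $Ce^{40\tau_0}$. Scaling back by $e^{-\tau_0/2}$, the corresponding pieces of $M_t$ and $N_t$ are within $C(h-t)^{1/2}(h-t)^{-40}$ of each other. This includes the entire slice $\{x_k=h\}$ near $x$, because the slice corresponds to $y_k=0$ and the graph directions are normal to $\Gamma$, hence orthogonal to $e_k$ up to small errors. The small tilt is handled by slicing the graphs and using the implicit function theorem, since both surfaces are transverse to $\{x_k=h\}$ there.

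Letting $p$ range over $N_t^h$, equivalently letting $x_1,\ldots,x_{k-1}$ range over $\mathbb{R}^{k-1}$, shows that every point of $N_t^h$ is within $(h-t)^{-20}$ of $M_t^h$ for $C_\ast$ large. The same pointwise graphical statement also covers every point of $M_t^h$: any such point projects to some $x$ and lies in the graphical region, provided the graphical region of $\bar M^X_{\tau_0}$ captures all of $M_t\cap\{x_k=h\}$ near the axis through $x$. This last point is the main obstacle. A priori $M_t^h$ could have extra components far from the neck. To rule this out, we use that at scale $Z(X)\lesssim (h-t)^{1/2}$ the flow is $\eps_0$-cylindrical in a large ball, so the whole slice near $x$ is a single graphical sphere. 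Points of $M_t^h$ at large distance from every such axis would violate the fact that $M_t$ lies inside the region swept out by these necks for $h-t\geq C_\ast$, which in turn follows from the tangent flow at $-\infty$ being the cylinder.
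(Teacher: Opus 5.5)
Your first half matches the paper's first sentence. You center on the axis at height $h$ at a later time $t'$, bound $Z(X)$ by Lemma~\ref{lemma_cyl_sc}, upgrade \eqref{dist_est_transl} to a pointwise bound near the origin, and slice at $y_k=0$. This correctly shows that $N_t^h$ is $(h-t)^{-20}$-close to the part of $M_t^h$ inside the neck balls, i.e.\ within distance $\sim\sqrt{h-t}/\eps_0$ of the axis.

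The genuine gap is the reverse inclusion: ruling out points of $M_t^h$ farther out. This is the direction the paper needs afterwards, for the ``no contact at infinity'' step in Theorem~\ref{thm_fast}. You assert that $M_t$ lies in the region swept out by the necks, and that this ``follows from the tangent flow at $-\infty$ being the cylinder''. It does not.

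\begin{itemize}
\item Convergence to the tangent flow only controls $e^{\tau/2}M_{-e^{-\tau}}$ on compact sets as $\tau\to-\infty$.
\item The relevant levels lie at unbounded renormalized distance from the origin. For example $C_\ast\le h-t\ll |t|$ corresponds to $|y_k|\sim\sqrt{|t|}$, and for $t$ not very negative the tangent flow says nothing at all.
\item The offending points have $|x_{k+1}|^2+\dots+|x_{n+1}|^2\gg h-t$, also far out.
\item Blowing down at such points only gives sublinear information in the $\mathbb{R}^{n+1-k}$-directions, i.e.\ their distance to the axis is $o(|x|)$.
\item Neither the neck balls nor monotonicity/entropy exclude a far-out sheet or fin of $M_t$ meeting $\{x_k=h\}$.
\end{itemize}

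The paper closes this with a separate, genuinely global input: the vanishing asymptotic slope of \cite[Proposition 2.5]{DuZhu},
$|x_{k+1}|^2+\dots+|x_{n+1}|^2\le |t|\,\varphi\bigl((x_k-t)/\sqrt{|t|}\bigr)^2$ with $\varphi'\to 0$. From this it concludes that $M_t^h$ has no other components. The same bound supplies the boundary condition at infinity for the barrier argument in Proposition~\ref{cor_cap_size}. You need this estimate, or an equivalent barrier argument, in place of the appeal to the tangent flow.

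Three smaller points:
\begin{itemize}
\item Keep the factor $(1-\lambda)$ in $Z(X)\le C\bigl((1-\lambda)(h-t)\bigr)^{1/2}$. That factor is what lets you achieve $\tau_0\le\mathcal{T}(Z(X))$ with $\lambda<1$. You in fact need $\lambda$ close to $1$, since the cross-section of $\bar N^X_{\tau_0}$ at $y_k=0$ has radius $\sqrt{2(n-k)/\lambda}$.
\item Gagliardo--Nirenberg with only $C^4$ bounds loses a dimension-dependent part of the exponent, which is too much for large $n$. Use the interior parabolic estimate you mention for \eqref{eq_evol_difference}; its cutoff term vanishes on $B_{10}$.
\item No tilt correction is needed, since normal graphs over $\Gamma$ preserve $y_k$ exactly.
\end{itemize}
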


\begin{proof} By combining the decay estimate \eqref{dist_est_transl} and Lemma \ref{lemma_cyl_sc}, we see that some connected component of $M_t^h$ is extremely close to $N_t^h$ at all high enough levels, with the quantitative closeness estimate \eqref{eq:Hausd_decay}. Moreover, our flow $M_t$ has vanishing asymptotic slope in the $\mathbb{R}^{n+1-k}$-directions by \cite[Proposition 2.5]{DuZhu}. Namely, there exists a smooth function $\varphi:\mathbb{R}\to\mathbb{R}_+$ with $\lim_{h\to \pm \infty} \varphi'(h)=0$, such that for all $t$ sufficiently negative we have
\begin{equation}\label{dir_inf}
|x_{k+1}|^2+\ldots+|x_{n+1}|^2\leq |t|\varphi \left( (x_k-t)/\sqrt{|t|} \right)^2.
\end{equation}
Therefore, there are no other connected components.
\end{proof}

\begin{proposition}[cap size]\label{cor_cap_size} We have $\inf_{\mathcal{M}} (x_k -t) > -\infty$.
\end{proposition}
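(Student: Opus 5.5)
We argue by contradiction: suppose there are points $(x^i,t_i)\in\mathcal{M}$ with $x^i_k-t_i\to-\infty$. Set $h_i=x^i_k$. The plan is to use Corollary \ref{cor_hausdorff_distance} to control the part of $M_{t_i}$ at heights $h\ge t_i+C_\ast$. We then run a barrier/avoidance argument showing that $M_{t_i}$ cannot reach down to level $h_i$, far below $t_i+C_\ast$.

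\textbf{Step 1: the region above the cap is a thin graph.} Fix $t$ sufficiently negative. By Corollary \ref{cor_hausdorff_distance}, for every $h\ge t+C_\ast$ the slice $M_t^h$ is $(h-t)^{-20}$-close in Hausdorff distance to the slice $N_t^h=\mathbb{R}^{k-1}\times \partial B^{n+1-k}(r(h-t))$. Here $r$ is the bowl profile, and the slices are taken in the $\mathbb{R}^{n+1-k}$-directions. In particular, the part of $M_t$ above level $t+C_\ast$ lies in a small tubular neighborhood of $N_t$. This neighborhood bounds a region $K_t$, which is $\mathbb{R}^{k-1}$ times a compact convex cap of bounded size near the tip of the bowl.

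\textbf{Step 2: exclude points far below.} Consider the portion of $M_t$ below level $t+C_\ast$. We want to show it is contained in the region near the bowl tip. The natural approach is to use the translating bowl $\mathcal{N}$ itself, slightly shifted, as a barrier. Take the flow $\tilde N_s=\Sigma+(s+\delta)e_k$ with a small shift $\delta>0$, moved upward so that $\tilde N$ is ahead of $\mathcal{N}$. Its complement region below the bowl must avoid $\mathcal{M}$.

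To justify the avoidance, we use the estimate \eqref{dir_inf}: the flow has vanishing asymptotic slope in the $\mathbb{R}^{n+1-k}$-directions. Going back to very negative times, \eqref{dir_inf} together with Corollary \ref{cor_hausdorff_distance} shows the following. At a time $t_0\ll t_i$, the whole $M_{t_0}$ restricted to any bounded $\mathbb{R}^{k-1}$-window lies inside the region enclosed by the shifted bowl $\tilde N_{t_0}$. The reasoning is as follows.
\begin{itemize}
\item Points with $x_k-t_0\ge C_\ast$ are handled by the Hausdorff bound, whose error $(h-t_0)^{-20}$ is much smaller than the gap between the bowl and its shift.
\item Points with lower $x_k$ would have to lie within $|t_0|^{1/2}\varphi(\cdot)$ of the axis by \eqref{dir_inf}. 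At a fixed time this is a compact cylinder-like region, so no further trouble arises there.
\end{itemize}
The avoidance principle for Brakke flows versus smooth translators (noncompactness handled via the $\mathbb{R}^{k-1}$-invariance and the Gaussian-scale control) then propagates the inclusion forward to time $t_i$. This forces $x^i_k\ge t_i+\delta-C$, a contradiction.

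\textbf{Main obstacle.} The delicate part is the initial inclusion at $t_0$ for points below level $t_0+C_\ast$. Equivalently, we must rule out a \emph{second sheet} or a long downward tongue of $M_{t_0}$ that is not seen by the slice estimate, since Corollary \ref{cor_hausdorff_distance} only controls high levels. I would first try to handle this via the blow-down picture: the renormalized flow converges to the cylinder. Hence any point with $x_k-t\to-\infty$ at scale comparable to $\sqrt{|t|}$ would appear in the cylindrical tangent flow at $-\infty$ at a position $y_k\to-\infty$. This is inconsistent with the fine expansion \eqref{eq_fine_neck} centered at points of the axis at low heights, whose cylindrical scale is controlled by Lemma \ref{lemma_cyl_sc}. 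If that fails, I would instead use the bowl shifted downward as an inner barrier to get a lower bound on the region swept out, and compare extinction of cross sections at each height.
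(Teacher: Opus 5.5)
Your proposal has a genuine gap. It sits exactly at the step you flag as the main obstacle, and that step carries the whole content of the proposition.

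\textbf{Why Step 2 is circular.} The inclusion of $M_{t_0}$ in a shifted bowl region at an early time $t_0$ is not a preliminary reduction. That region lies above the tip of the shifted bowl, so the inclusion already contains the bound $x_k-t_0\geq -C$ on $M_{t_0}$, i.e.\ the proposition at time $t_0$. Since the $t_i$ may tend to $-\infty$, you would even need it at arbitrarily negative $t_0$ with a uniform shift. The forward propagation by avoidance is the easy part.

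\textbf{Why your justifications for the low part fail.}
\begin{enumerate}
\item Estimate \eqref{dir_inf} only bounds $|x''|^2=x_{k+1}^2+\dots+x_{n+1}^2$, sublinearly in $x_k-t$. Since $\varphi$ is defined on all of $\mathbb{R}$ with $\varphi'\to 0$ also at $-\infty$, the region it describes extends to $x_k\to-\infty$ and is invariant in the $\mathbb{R}^{k-1}$-directions. So it is not compact, and it allows arbitrarily long thin downward tongues. Even a bounded set below the tip would not lie inside the shifted bowl region.
\item The blow-down fallback fails too. The fine expansion centered at $X$ holds only for $\tau\leq\mathcal{T}(Z(X))$, and Lemma \ref{lemma_cyl_sc} controls $Z(X)$ only when $x_k-t\geq h_*$, i.e.\ high above the tip, not at low heights.
\item The renormalized flow centered at the origin is controlled only for $|x|\lesssim |t|^{3/5}$ (from $\rho=e^{-\tau/10}$). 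Points near or below the tip sit at $y_k\approx -|t|^{1/2}\to-\infty$, where convergence to the cylinder says nothing.
\item An inner barrier only shows what $\mathcal{M}$ must enclose, not how far down it reaches.
\item There is also a sign slip. An upward shift shrinks the enclosed region. At level $h$ the gap between $N_t$ and $N_t+\delta e_k$ is of order $\delta(h-t)^{-1/2}\gg (h-t)^{-20}$, so $M_t$ lies \emph{outside} $K_t+\delta e_k$ at high levels. The shift must be downward.
\end{enumerate}

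\textbf{The missing idea.} You need a barrier that traps the low part of the flow without any a priori knowledge of it. The paper proceeds as follows.
\begin{enumerate}
\item Recenter at $X_0=(x_0,t_0)$ with $(x_0)_k=t_0+C_\ast$, just above the tip. Going back in $\tau$, the slice $\{y_k=-10\}$ of $\bar M^{X_0}_\tau$ corresponds to heights far above the tip at earlier times. Corollary \ref{cor_hausdorff_distance} therefore gives the strict inequality $u^{X_0}\leq -e^{\tau/2}$ there, uniformly in $X_0$.
\item The Kleene--Moller shrinkers $\mathbb{R}^{k-1}\times\Sigma_b^{n-k+1}$ lie outside the cylinder in a half-space, with asymptotic slope $b$. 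They serve as static barriers for the renormalized flow in $\{y_k\leq -10\}$, with \eqref{dir_inf} supplying the separation as $y_k\to-\infty$.
\item Avoidance, together with $b\to 0$, puts $\bar M^{X_0}_\tau\cap\{y_k\leq -10\}$ inside $\Gamma$. Unrescaled, $M_{t_0-\Delta}\cap\{x_k-t_0\leq -H\}$ lies inside a cylinder of radius $\sqrt{2(n-k)\Delta}$, which is about to become extinct.
\item A comparison argument as in \cite[Lemma 4.31]{CHH} then gives $x_k-t_0\geq -C$ on $M_t$ for $t\geq t_0+1$.
\end{enumerate}
A downward tongue is never ruled out a priori; it is squeezed into a shrinking cylinder and dies. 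Your remark about extinction of cross sections points in this direction, but it only works once the low part has been trapped in a cylinder, which is what the Kleene--Moller barriers provide. The bowl comparison you have in mind is what the paper does afterwards in Theorem \ref{thm_fast}, and it uses this proposition as input to get $M_t\subset K_t-\mu e_k$.
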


\begin{proof}
By Corollary \ref{cor_hausdorff_distance} for $\tau$ sufficiently negative we have
\begin{equation}\label{dir_fin}
u^{X_0}|_{y_k=-10} \leq -e^{\tau/2}
\end{equation}
uniformly for all centers $X_0=(x_0,t_0)$, where
\begin{equation}
 x_0=((x_0)_1,\ldots,(x_0)_{k-1},(t_0+C_\ast)e_k,0,\ldots,0).
 \end{equation}
 To proceed, recall that Kleene-Moller \cite{KleeneMoller} constructed rotationally symmetric shrinkers $\Sigma_b^d$ in a halfspace that lie outside of the cylinder and have asymptotic slope $b$. Now, consider $\mathbb{R}^{k-1}\times \Sigma_b^{n-k+1}\subset \mathbb{R}^{n+1}$, and observe that this lies outside of $\bar{M}^{X_0}_\tau$ with positive distance for $y_k\to -\infty$ and for $y_k=-10$ thanks to \eqref{dir_inf} and  \eqref{dir_fin}.
Applying the avoidance principle \cite{White_avoidance}, and taking into account that $b>0$ is arbitrary, we thus infer that for $\tau$ sufficiently negative the renormalized flow $\bar{M}^{X_0}_\tau \cap \{y_k\leq -10\}$ lies inside the cylinder $\Gamma$. In terms of the unrescaled flow this mean that there are uniform constants $\Delta<\infty$ and $H<\infty$, such that $M_{t_0-\Delta}\cap \{ x_k- t_0\leq -H\}$ is contained inside a cylinder of radius $\sqrt{2(n-k)\Delta}$. Hence, by comparison (c.f. \cite[Lemma 4.31]{CHH}) we infer that $\inf_{M_t} (x_k-t_0) \geq -C$ for $t\geq t_0+1$.
Since $X_0$ was arbitrary, this proves the proposition.
\end{proof}

\begin{theorem}[fast convergence]\label{thm_fast}
Our ancient cylindrical flow with fast convergence is identical to its best fitting $\mathbb{R}^{k-1}\times\mathrm{Bowl}_{n+1-k}$ translator, namely $\mathcal{M}=\mathcal{N}$.
\end{theorem}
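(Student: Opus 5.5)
We plan to conclude with the strong maximum principle for Brakke flows from \cite{CHHW}, using a one-parameter family of translated models. For $s\in\mathbb{R}$ consider the shifted translators $\mathcal{N}^s=\{\Sigma+(t+s)e_k\}_{t\in\mathbb{R}}$. Since $\Sigma$ is a strictly convex graph in the $e_k$-direction (times $\mathbb{R}^{k-1}$), the family $\{\mathcal{N}^s\}_s$ foliates space-time: for $s>0$ the solution $\mathcal{N}^s$ lies strictly inside the region enclosed by $\mathcal{N}=\mathcal{N}^0$, and for $s<0$ strictly outside. By Corollary \ref{cor_hausdorff_distance}, the level sets of $\mathcal{M}$ and $\mathcal{N}$ are $(h-t)^{-20}$-close for $h-t\geq C_\ast$. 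On the other hand, the level sets of $\mathcal{N}^s$ and $\mathcal{N}$ at height $h$ differ in radius by roughly $|s|\, r'(h-t)\sim |s|(h-t)^{-1/2}$, which is much larger than $(h-t)^{-20}$. Hence for every $s\neq 0$ the flows $\mathcal{M}$ and $\mathcal{N}^s$ are disjoint and correctly ordered in the region $\{x_k-t\geq C(s)\}$. The remaining region $\{x_k-t\leq C(s)\}$ is bounded in the $e_k$-direction by Proposition \ref{cor_cap_size}, and by \eqref{dir_inf} it is also bounded in the $\mathbb{R}^{n+1-k}$-directions. So the only noncompact directions left there are the $\mathbb{R}^{k-1}$ factor, along which both flows are controlled uniformly by the same estimates.

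The key steps, in order, are as follows.

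First, show that for $s\gg 1$ the flow $\mathcal{M}$ lies strictly outside $\mathcal{N}^{-s}$ and strictly inside $\mathcal{N}^{s}$, i.e. it is sandwiched. In the cap region this follows from Proposition \ref{cor_cap_size}: $\mathcal{M}$ lies in $\{x_k-t\geq -C\}$, while the tip of $\mathcal{N}^{s}$ sits at $x_k-t=-s$. In the asymptotic region it follows from the Hausdorff estimate just described.

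Second, define $s_*=\inf\{s>0:\mathcal{M}$ lies weakly inside $\mathcal{N}^{s'}$ for all $s'\geq s\}$, and suppose towards a contradiction that $s_*>0$. At $s_*$ there must be a touching, either at a finite space-time point or at infinity. Touching at infinity along $x_k-t\to\infty$ is excluded by the gap $|s_*|(h-t)^{-1/2}\gg (h-t)^{-20}$. Touching at infinity in the $\mathbb{R}^{k-1}$-directions, or as $t\to\pm\infty$, is handled by translating back and passing to limits. Such limits are again ancient asymptotically cylindrical flows with fast convergence satisfying the same uniform estimates (the constants in Corollary \ref{cor_hausdorff_distance} and Proposition \ref{cor_cap_size} are uniform over these centers), so the touching can be realized at a finite point of a limit flow. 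Applying the strong maximum principle of \cite{CHHW} then forces the limit (or $\mathcal{M}$) to coincide with $\mathcal{N}^{s_*}$. This contradicts the asymptotics, since $\mathcal{M}$, and hence any limit, is $(h-t)^{-20}$-close to $\mathcal{N}=\mathcal{N}^0$ and not to $\mathcal{N}^{s_*}$. Thus $s_*=0$. The symmetric argument from outside gives that $\mathcal{M}$ lies weakly outside $\mathcal{N}^{s}$ for all $s<0$, so $\mathcal{M}$ is squeezed between $\mathcal{N}^{0^\pm}$, and therefore $\mathcal{M}=\mathcal{N}$.

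The main obstacle is the noncompactness in the compact-in-$e_k$ region, namely possible touching at spatial infinity in the $\mathbb{R}^{k-1}$ directions or as $t\to-\infty$. Here I would first try to exploit that Corollary \ref{cor_hausdorff_distance} and Proposition \ref{cor_cap_size} hold uniformly over all centers $X$ with arbitrary $(x_1,\dots,x_{k-1})$ and $t$. Then the sequences of translated flows subconverge, by the compactness of Brakke flows with unit-regularity and the uniform cylindrical estimates, to flows $\mathcal{M}'$ with the same properties relative to the same $\mathcal{N}$. In these limits the touching becomes interior, and one can run the strong maximum principle there to reach the same contradiction. A secondary technical point is that $\mathcal{M}$ is a priori only a Brakke flow. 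The strong maximum principle from \cite{CHHW} is designed for exactly this situation of a Brakke flow touching a smooth one, so I expect it to apply directly.
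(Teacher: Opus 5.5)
Your proposal is correct and is essentially the paper's argument. It slides the translator in the $e_k$-direction, uses Corollary~\ref{cor_hausdorff_distance} to exclude contact at infinity and Proposition~\ref{cor_cap_size} to start the sliding, passes to limits of $\mathcal{M}-X_i$ at near-contact points, and then applies the strong maximum principle of \cite{CHHW} (which applies because the entropy is below $2$), contradicting the absence of contact at infinity.

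A few slips need fixing. First, with your definition, $\mathcal{N}^{s}$ for $s>0$ is the \emph{inner} bowl, with tip at $x_k-t=+s$, so $s$ and $-s$ are swapped in your two key steps. Second, \eqref{dir_inf} only bounds the $\mathbb{R}^{n+1-k}$-components by roughly $|t|^{1/2}$ in the cap region. The $t$-uniform lateral bound you need to start the sliding should instead come from Corollary~\ref{cor_hausdorff_distance} combined with a barrier argument, e.g. shrinking spheres going back in time. Third, the limit flows only need to inherit the Hausdorff and cap-size estimates (and entropy below $2$); you do not need to show they again have fast cylindrical convergence.
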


\begin{proof}
Denote by $K_t$ the closed convex domain with $\partial K_t= N_t$. If we move this domain slightly down, then by Corollary \ref{cor_hausdorff_distance} there is no contact at infinity. Namely, for all $\mu>0$ there exists an $h<\infty$ such that $M_t\cap\{ x_k\geq h\} \subset \mathrm{Int}(K_t-\mu e_k)$ for all $t$. Together with Proposition \ref{cor_cap_size} this implies that there is a $\mu<\infty$, such that $M_t\subset K_t-\mu e_k$ for all $t$. Let $\mu_\ast$ be the infimal such $\mu$, and suppose towards a contraction that $\mu_\ast>0$. Then, there exist $X_i=(x_i,t_i)\in\mathcal{M}$, such that $d(x_i,K_{t_i}-\mu_\ast e_k)\to 0$. Take subsequential limits $\bar{\mathcal{M}}$ of $\mathcal{M}-X_i$ and $\bar{\mathcal{K}}$ of $\mathcal{K}-X_i$. Then, $\bar{M_t}\subset \bar{K}_t-\mu_\ast e_k$ for all $t$, and $0\in \bar{M}_0 \cap \bar{K}_0-\mu_\ast e_k$. Hence, the strong maximum principle for integral Brakke flows \cite[Theorem 3.4]{CHHW}, which is applicable since ancient cylindrical flows have entropy less than $2$, yields that $\bar{M}_t=\partial \bar{K}_t - \mu_\ast e_k$. This contradicts the above fact that there is no contact at infinity, and thus shows that $M_t\subset K_t$ for all $t$. Similarly, for upwards shifts by any $\mu>0$ we get that $M_t\cap (K_t+\mu e_k)=\emptyset$ for all $t$. 
\end{proof}

\bigskip

\section{Ancient cylindrical flows with slow convergence}

Throughout this section, $\mathcal{M}=\{ M_t \}$ denotes an ancient cylindrical flow with slow convergence. Namely, the renormalized flow $\bar{M}_\tau=e^{\tau/2}M_{-e^{-\tau}}$ can be expressed as graph of a function $u(\cdot,\tau)$ over domains exhausting the cylinder $\Gamma=\mathbb{R}^k\times S^{n-k}(\sqrt{2(n-k)})$, and for $\tau\to -\infty$ we have
\begin{equation}\label{eq_unst_dom}
\hat{u}=-\sqrt{2(n-k)} \sum_{i=1}^k \frac{y_i^2-2}{4|\tau|} + o(|\tau|^{-1})
\end{equation}
in $\mathcal{H}$-norm. Here, as usual, we work with the truncated graph function
\begin{equation}
\hat{u}^X(y,\omega,\tau)=u^X(y,\omega,\tau)\chi(|y|/\rho(\tau)),
\end{equation}
where $\rho(\tau)$ denotes any admissible graphical radius. In fact, by \cite[Proposition 2.8]{DuZhu} one can take $\rho(\tau)=|\tau+2\log \hat{Z}(X)|^\gamma$ for $\tau\leq \tau_\ast-2\log\hat{Z}(X)$, where here and in the following we abbreviate $\hat{Z}(X)=\max \{ Z(X),1\}$.

The following lemma shows that if at some $\tau_0>-\infty$ the unstable eigenfunctions still account for at least some percentage of the Gaussian $L^2$-norm of $\hat{u}$, then this percentage condition is preserved forward in time.

\begin{lemma}[visible unstable mode]\label{lem:unstable_mode_ODE}For every $\Lambda<\infty$, there exists a $\tau_\Lambda>-\infty$ with the following significance. If $\|\hat u^X\|_{\mathcal{H}} \leq \Lambda \|P_+\hat u^X\|_{\mathcal{H}}$ holds at some $\tau_0\leq \tau_{\Lambda}-2\log\hat{Z}(X)$, then for all $\tau\in [\tau_0,\tau_{\Lambda}-2\log\hat{Z}(X)]$ we have
 \begin{equation}
 \|\hat u^X\|_{\mathcal{H}} \leq \Lambda \|P_+\hat u^X\|_{\mathcal{H}},
 \end{equation}
and thus in particular
\begin{equation}
\tfrac{d}{d\tau} \|P_+\hat u^X\|_{\mathcal{H}} \geq \tfrac{1}{4}\|P_+\hat u^X\|_{\mathcal{H}}.
\end{equation}
\end{lemma}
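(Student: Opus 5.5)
Let $U_+=\|P_+\hat u^X\|_{\mathcal{H}}^2$, $U_0=\|P_0\hat u^X\|_{\mathcal{H}}^2$ and $U_-=\|P_-\hat u^X\|_{\mathcal{H}}^2$, where $P_0$ projects onto the neutral eigenspace (quadratic polynomials $y_iy_j-2\delta_{ij}$ and the degree-one spherical harmonics in $\omega$) and $P_-$ onto the stable part. The condition $\|\hat u^X\|_{\mathcal{H}}\le\Lambda\|P_+\hat u^X\|_{\mathcal{H}}$ is equivalent to $U_0+U_-\le(\Lambda^2-1)U_+$. The plan is to show that the ratio $(U_0+U_-)/U_+$ cannot cross the value $\Lambda^2-1$ from below, since at the crossing time its derivative is negative. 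This is the standard "cone preservation" step in the proof of the Merle--Zaag lemma.

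\textbf{Step 1: error estimates.} Using \cite[Proposition A.1]{DH_shape} exactly as in Lemma \ref{lemma_evol_difference}, we have $\partial_\tau\hat u^X=\mathcal{L}\hat u^X+E^X$. The quadratic error $E^X$ should satisfy
\[
|\langle E^X,\hat u^X\rangle_{\mathcal H}|+\|P_{\pm,0}E^X\|_{\mathcal H}\|\hat u^X\|_{\mathcal H}\le \delta(\tau)\big(\|\hat u^X\|^2_{\mathcal H}+\|\nabla\hat u^X\|^2_{\mathcal H}\big)+\text{(cutoff error)},
\]
with $\delta(\tau)\to0$ as $\tau-(-2\log\hat Z(X))\to-\infty$. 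This holds because $u^X$ is small in $C^2$ on $B_{2\rho}$, by admissibility of $\rho(\tau)=|\tau+2\log\hat Z(X)|^\gamma$ from \cite[Proposition 2.8]{DuZhu}.

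\textbf{Step 2: the main obstacle, the cutoff term.} Unlike the fast case, the cutoff error is not exponentially small. It is of size roughly $e^{-\rho^2/8}$. I would bound it by $\delta(\tau)\|\hat u^X\|^2_{\mathcal{H}}$ using a lower bound $\|\hat u^X\|_{\mathcal H}\gtrsim|\tau|^{-1}$, which follows from the slow-convergence expansion \eqref{eq_unst_dom}. The key observation is that $e^{-|\tau|^{2\gamma}/8}\ll|\tau|^{-2}$. Making this uniform in $X$ requires the rescaled version of \eqref{eq_unst_dom} centered at $X$, or alternatively absorbing the cutoff error into the $\|\hat u^X\|$-terms via the equation itself. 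This is where I expect most of the technical work.

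\textbf{Step 3: differential inequalities.} Given Step 1 and Step 2, I would derive
\[
\tfrac{d}{d\tau}U_+\ge U_+-\delta\,U,\qquad \big|\tfrac{d}{d\tau}U_0\big|\le\delta\,U,\qquad \tfrac{d}{d\tau}U_-\le-\tfrac{1}{n-k}U_-+\delta\,U,
\]
where $U=U_++U_0+U_-$. On the set where $U\le\Lambda^2U_+$ this gives
\[
\tfrac{d}{d\tau}\big(U_0+U_--(\Lambda^2-1)U_+\big)\le-(\Lambda^2-1)U_++C\Lambda^2\delta U_+<0,
\]
once $\tau_\Lambda$ is chosen so that $C\Lambda^2\delta\le\tfrac12$. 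At a first exit time this quantity would vanish and be increasing, which is a contradiction. Hence the condition persists on $[\tau_0,\tau_\Lambda-2\log\hat Z(X)]$.

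\textbf{Step 4: growth of the unstable part.} For the second assertion, $P_+$ has eigenvalues $1$ and $\tfrac12$, so
\[
\tfrac{d}{d\tau}\|P_+\hat u^X\|_{\mathcal H}\ge\tfrac12\|P_+\hat u^X\|_{\mathcal H}-\|P_+E^X\|_{\mathcal H}\ge\big(\tfrac12-\delta\Lambda\big)\|P_+\hat u^X\|_{\mathcal H}\ge\tfrac14\|P_+\hat u^X\|_{\mathcal H}.
\]
The middle inequality uses the preserved bound $\|\hat u^X\|_{\mathcal H}\le\Lambda\|P_+\hat u^X\|_{\mathcal H}$, after shrinking $\tau_\Lambda$ further if necessary. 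Note that the growth term $U_+$ on the right of Step 3 can only drop out if $U_+=0$, and this case is excluded by the hypothesis together with nontriviality.
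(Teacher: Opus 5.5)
\textbf{The gap is in your Step 2.} Your Steps 3--4 are the paper's argument in a slightly finer form. The paper works with $f=\Lambda^2\|P_+\hat u^X\|_{\mathcal H}^2-\|\hat u^X\|_{\mathcal H}^2=-(U_0+U_--(\Lambda^2-1)U_+)$ and shows $\tfrac{d}{d\tau}f\geq(\Lambda^2-1)\|P_+\hat u^X\|_{\mathcal H}^2+o(1)\|\hat u^X\|_{\mathcal H}^2$, which is positive on $\{f\geq 0\}$.

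The route you propose in Step 2 fails. The lemma's hypothesis contains no lower bound on $\|\hat u^X\|_{\mathcal H}$. Also, $\tau_\Lambda$ must be independent of $X$, including centers with $\hat Z(X)$ arbitrarily large. This is how the lemma is used at $X_0'=(\ell e_1,t_0')$ in Proposition \ref{thm:cylindrical_diameter}, on a window $[\tau_0,\tau_\Lambda+\tau_1]$ tied to the bubble-sheet scale.

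Your lower bound $\|\hat u^X\|_{\mathcal H}\gtrsim|\tau|^{-1}$ fails on two counts:
\begin{enumerate}[(i)]
\item The expansion \eqref{eq_unst_dom} describes only the flow centered at the origin, with a non-uniform $o(|\tau|^{-1})$. It gives no uniform lower bound for $\hat u^X$ at other centers.
\item Even granting that bound at every center, your comparison $e^{-|\tau|^{2\gamma}/8}\ll|\tau|^{-2}$ silently replaces $\rho=|\tau+2\log\hat Z(X)|^\gamma$ by $|\tau|^\gamma$. Near the right end of the window, $\tau\approx\tau_\Lambda-2\log\hat Z(X)$, admissibility of $\rho$ only bounds the cutoff error by something of order $e^{-c|\tau_\Lambda|^{2\gamma}}$. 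That is a fixed number once $\tau_\Lambda$ is chosen. Meanwhile $|\tau|^{-1}=(2\log\hat Z(X)+|\tau_\Lambda|)^{-1}\to 0$ as $\hat Z(X)\to\infty$. So the additive cutoff term cannot be absorbed uniformly.
\end{enumerate}
Your fallback, absorbing the cutoff term ``via the equation itself,'' is the right idea. However, it is exactly the nontrivial estimate, and you leave it unproved.

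\textbf{What is needed.} You need an error bound with no additive term, that is, control of $u^X$ on the annulus $\{\rho/2\leq|y|\leq\rho\}$ by $\|\hat u^X\|_{\mathcal H}$ itself. The paper takes exactly this from \cite[Proposition 2.1]{DuZhu}:
\[
\|(\partial_\tau-\mathcal L)\hat u^X\|_{\mathcal H}\leq C\rho^{-1}\|\hat u^X\|_{\mathcal H}\quad\text{for }\tau\leq\tau_\ast-2\log\hat Z(X).
\]
With this estimate:
\begin{itemize}
\item On the window you get $\delta\leq C|\tau_\Lambda|^{-\gamma}$ uniformly in $X$, and your Steps 3--4 close as written.
\item The $\|\nabla\hat u^X\|_{\mathcal H}^2$ term in your Step 1 disappears. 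As stated, that term does not give $|\tfrac{d}{d\tau}U_0|\leq\delta U$ without an integration by parts against the finitely many polynomial eigenfunctions.
\end{itemize}

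\textbf{Two minor slips.} First, the degree-one spherical harmonics ($y_{k+1},\ldots,y_{n+1}$ restricted to $\Gamma$) have eigenvalue $\tfrac12$ and lie in $\mathcal H_+$. The neutral modes are $y_iy_j-2\delta_{ij}$ and $y_iy_m$ with $i\leq k<m$. Second, with your inequalities the constant in Step 3 is of order $\Lambda^4\delta$, not $\Lambda^2\delta$.
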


\begin{proof} Consider the function
\begin{equation}
f(\tau)=\Lambda^2\|P_+\hat u^X\|_{\mathcal{H}}^2-\|\hat u^X\|_{\mathcal{H}}^2.
\end{equation}
Recall from \cite[Proposition 2.1]{DuZhu} that $\hat{u}^X$ evolves according to
\begin{equation}
\| (\partial_\tau-\mathcal{L})\hat{u}^X \|_{\mathcal{H}}\leq C \rho^{-1} \| \hat{u}^X \|_{\mathcal{H}}.
\end{equation}
Consequently, the function $f$ satisfies
\begin{equation}
\tfrac{d}{d\tau} f\geq  (\Lambda^2-1)\|P_+\hat u^X\|_{\mathcal{H}}^2+o(1)\|\hat u^X\|_{\mathcal{H}}^2.
\end{equation}
Hence, fixing $\tau_\Lambda>-\infty$ sufficiently negative, we conclude that if $f(\tau_0)\geq 0$ for some $\tau_0\leq \tau_\Lambda-2\log\hat{Z}(X)$, then $f(\tau)\geq 0$ for all $\tau\in [\tau_0,\tau_{\Lambda}-2\log\hat{Z}(X)]$, as desired.
\end{proof}

\begin{proposition}[size of bubble-sheet region]\label{thm:cylindrical_diameter}
There exist constants $\eps>0$ and $t_\ast=t_\ast(\mathcal{M})>-\infty$ with the following significance. Suppose that $M_{t_0}$ has an $\varepsilon$-bubble-sheet with center $x_0$ and radius $r(x_0,t_0)$, where $t_0\leq t_\ast$. Then,
\begin{equation}\label{eq:locataion_est}
| x_0| \leq  |t_0|^{\frac{1}{2}} (\log |t_0|)^2,
\end{equation}
and 
\begin{equation}
|t_0|^{\frac{1}{2}}(\log |t_0|)^{-2} \leq r(x_0,t_0) \leq 2(n-k)^{1/2}|t_0|^{\frac{1}{2}}.
\end{equation}
\end{proposition}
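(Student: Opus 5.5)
We argue by contradiction, following the outline in the introduction. Suppose $M_{t_0}$ has an $\eps$-bubble-sheet with center $x_0$ and radius $r=r(x_0,t_0)$, and set $d=|x_0|$. Assume $d> |t_0|^{1/2}(\log|t_0|)^2$. We consider the renormalized flow centered at $X_0=(x_0,t_0)$. Since $X_0$ is an $\eps$-bubble-sheet point at time $t_0$, its cylindrical scale satisfies $Z(X_0)\le C r$. Also $r\lesssim |t_0|^{1/2}$ by comparison with the global shrinking cylinder, so $\hat Z(X_0)$ is controlled. We then look at the scale $\tau_1=-\log(d^2)$, which corresponds to unrescaled time $t_0-d^2$. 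Since $d^2\gg |t_0|$, at this time the whole flow is, on the ball of radius $\sim d$, governed by the expansion \eqref{eq_unst_dom} of the flow centered at the origin at time $\approx -d^2$. We rewrite that expansion in coordinates centered at $x_0$ via $y'=y-e^{\tau/2}x_0$, where the shift has size $e^{\tau_1/2}d=1$. Expanding the quadratic profile $-\sqrt{2(n-k)}\sum_i (y_i^2-2)/(4|\tau|)$ under this unit translation produces linear terms $\sim y\cdot \hat x_0/|\tau_1|$ and constant terms of order $|\tau_1|^{-1}$. The time offset $t_0$ is negligible since $|t_0|\ll d^2$. Hence at $\tau_1$ the function $\hat u^{X_0}$ has $\|P_+\hat u^{X_0}\|_{\mathcal H}\ge c|\tau_1|^{-1}$ while $\|\hat u^{X_0}\|_{\mathcal H}\le C|\tau_1|^{-1}$. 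This is the hypothesis of Lemma \ref{lem:unstable_mode_ODE} with a universal $\Lambda$.

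Next we apply Lemma \ref{lem:unstable_mode_ODE} forward from $\tau_1$ to $\tau_2=-\log(|t_0|)-C'$, which corresponds to times just before $t_0$ and to the scale where the $\eps$-bubble-sheet at $X_0$ is visible. We must check that $\tau_2\le \tau_\Lambda-2\log\hat Z(X_0)$, which holds if $Z(X_0)\lesssim |t_0|^{1/2}$ with a suitable constant. The lemma gives exponential growth $\|P_+\hat u^{X_0}(\tau_2)\|\ge c|\tau_1|^{-1}e^{(\tau_2-\tau_1)/4}\approx c\,(\log d)^{-1}(d^2/|t_0|)^{1/4}$. With $d\ge |t_0|^{1/2}(\log|t_0|)^2$, this is at least $c(\log|t_0|)^{-1}\cdot \log|t_0| $, which tends to infinity or at least exceeds any fixed $\eps$-threshold. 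This contradicts the $\eps$-closeness to the bubble-sheet near $\tau_2$, which forces $\|\hat u^{X_0}\|_{\mathcal H}\le C\eps$. The power $(\log)^2$ is chosen precisely so that the growth factor beats the $1/|\tau_1|\sim 1/\log d$ prefactor. This proves \eqref{eq:locataion_est}.

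For the radius bounds, the upper bound follows from the expansion \eqref{eq_unst_dom} together with \eqref{eq:locataion_est}. The point $x_0$ lies within $|t_0|^{1/2}(\log|t_0|)^2$ of the axis, which is still well inside the graphical region of radius $|\tau|^\gamma$ in renormalized units only up to log factors. So instead we use the outer barrier or convexity of the shrinking cylinder: the flow lies inside the cylinder of radius $\sqrt{2(n-k)|t|}$ (the asymptotic cylinder is an outer barrier, via \cite{DuZhu}), giving $r\le 2(n-k)^{1/2}|t_0|^{1/2}$. For the lower bound we rerun the same contradiction argument. If $r<|t_0|^{1/2}(\log|t_0|)^{-2}$, then the renormalized flow centered at $X_0$, taken at the scale $\tau_2'=-\log(r^2)$ and compared with the scale $-\log|t_0|$, must have a large deviation. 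The unstable mode, specifically the constant mode measuring the radius deficit, is then visible at the earlier time. Propagating it forward by Lemma \ref{lem:unstable_mode_ODE}, it grows by a factor $(|t_0|/r^2)^{1/4}\ge (\log|t_0|)$, which again beats the $|\tau|^{-1}$ prefactor. This contradicts the $\eps$-bubble-sheet at scale $r$.

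The main obstacle is the first step: making the claim that at $\tau_1$ the recentered flow is described by \eqref{eq_unst_dom} rigorous. This requires the graphical radius $|\tau|^\gamma$ around the origin to cover the shifted center, which is at renormalized distance $1$, and the truncations at the two centers to be compatible. It also requires controlling $\hat Z(X_0)$ so that the whole interval $[\tau_1,\tau_2]$ lies in the range where Lemma \ref{lem:unstable_mode_ODE} applies. I would handle this by first proving a crude a priori bound $Z(X_0)\le C|t_0|^{1/2}$ from the $\eps$-bubble-sheet assumption, and then choosing $t_\ast$ negative enough.
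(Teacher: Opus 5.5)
Your location argument uses the same mechanism as the paper: a linear mode of size $\sim 1/\log d$ produced by recentering the quadratic profile at scale $d=|x_0|$, forward growth by Lemma~\ref{lem:unstable_mode_ODE}, and a contradiction with $\eps$-closeness at a later scale. Your upper radius bound via outer barriers also matches. The genuine gap is the lower bound on $r$, which your last paragraph does not establish.

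First, centered at $X_0=(x_0,t_0)$, the bubble-sheet hypothesis says that at $\tau=-\log r^2$ the renormalized flow is close to a cylinder of radius $\sqrt{1+2(n-k)}$, not to $\Gamma$. This is simply because $t_0$ is not the extinction time of the local cylinder. The ``large deviation'' and ``radius-deficit constant mode'' you find there are this centering artifact. They are fully consistent with the hypothesis, and propagating them back and forth gives no contradiction. To have $\eps$-closeness to $\Gamma$ at scale $r$ you must center at $X_0'=(x_0,t_0')$ with $t_0'=t_0+r^2/(2(n-k))$.

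Second, nothing certifies the hypothesis of Lemma~\ref{lem:unstable_mode_ODE} with $\|P_+\hat u\|_{\mathcal H}\gtrsim|\tau|^{-1}$ at scale $|t_0|^{1/2}$. At that scale $x_0$ may sit at renormalized distance up to $(\log|t_0|)^2$ from the origin, far outside the graphical radius $|\tau|^\gamma$ where \eqref{eq_unst_dom} gives any information.

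Relatedly, $Z(X_0)\lesssim r$ does not follow from the definition of $Z$. It requires propagating closeness backward to all larger scales, which the paper does via the rigidity case of Huisken's monotonicity formula, using that the tangent flow at $-\infty$ is $\Gamma$.

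The missing idea is that both the location bound and the lower radius bound come from one comparison, between scale $\ell=|x_0|$ and scale $r$ rather than scale $|t_0|^{1/2}$.
\begin{itemize}
\item If $\ell<10|t_0|^{1/2}$, everything follows directly from \eqref{eq_unst_dom}.
\item Otherwise, center at $X_0'$. The recentered expansion at $\tau_0=-\log(\ell^2+t_0')$ gives $\|\hat u^{X_0'}\|_{\mathcal H}\le C(\log\ell)^{-1}\le C^2\|P_+\hat u^{X_0'}\|_{\mathcal H}$.
\item Huisken's monotonicity gives $\eps'$-closeness for all $\tau\le\tau_1=-\log(r^2/(2(n-k)))$.
\item Running the lemma on $[\tau_0,\tau_\Lambda+\tau_1]$ then yields $\ell/(\log\ell)^2\le O(\eps)\,r$.
\end{itemize}
Combined with $r\le 2(n-k)^{1/2}|t_0|^{1/2}$, this gives \eqref{eq:locataion_est}. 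Combined with $\ell\ge 10|t_0|^{1/2}$ and monotonicity of $x/(\log x)^2$, it gives $r\ge|t_0|^{1/2}(\log|t_0|)^{-2}$. By stopping your growth argument at $\tau_2\approx-\log|t_0|$, you discard exactly the extra growth factor $(|t_0|/r^2)^{1/4}$ that encodes the lower bound on $r$.
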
 

\begin{proof}
By rotating coordinates we may assume without loss of generality that $\ell=\langle x_0,e_1\rangle>0$ and $\langle x_0,e_j\rangle = 0$ for $j=2,\ldots,k$. Moreover, since the assertion holds in the central region by  \eqref{eq_unst_dom}, we may assume that
\begin{equation}\label{negl1}
\ell \geq 10|t_0|^{\frac{1}{2}}.
\end{equation}
Also by \eqref{eq_unst_dom} for $\tau$ sufficiently negative we have $u\leq 0$ for $|y|=10$. Hence, using the rotated KM-barriers from \cite[Corollary 2.4]{DuZhu}, which satisfy the boundary condition at infinity  thanks to the vanishing assymptotic slope from \cite[Proposition 2.5]{DuZhu}, we infer that $\bar{M}_\tau\setminus B_{10}(0)$ is contained inside the cylinder $\Gamma$. In other words, for $t$ sufficiently negative we have
\begin{equation}\label{eq:x34_bound}
\sup_{x\in  M_t\setminus B_{10\sqrt{-t}}(0)}\sum_{j= k+1}^{n+1}\langle x,e_j\rangle^2 \leq 2(n-k)|t|.
\end{equation}
In particular, this already shows that
\begin{equation}\label{negl2}
r(x_0,t_0)^2\leq 4(n-k)|t_0|.
\end{equation}

Next, applying the expansion \eqref{eq_unst_dom} at $\tau=-\log(\ell^2)\ll 0$ we see that the unrescaled hypersurface $M_{-\ell^2}$ for $ |x| \leq 10\ell $ satisfies
\begin{equation}
\left(\sum_{j=k+1}^{n+1}\frac{x_j^2}{\ell^2}\right)^{\frac{1}{2}} =  \sqrt{2(n-k)}\left( 1-\sum_{i=1}^k \frac{(x_i/\ell)^2-2}{8\log\ell}+o(|\log\ell|^{-1})\right).
\end{equation}
We now considering the flow $M_{t}^{X_0'}=M_{t+t_0'}-x_0'$ centered at
\begin{equation}
X_0'=(x_0',t_0'),\qquad \textrm{where }\,\, x_0'=\ell e_1,\,\, t_0' = t_0 + \frac{r(x_0,t_0)^2}{2(n-k)}.
\end{equation}
Then, the hypersurface $M_{-\ell^2-t_0'}^{X_0'}$ for $ |x| \leq 9\ell $ satisfies
\begin{multline}
\left(\sum_{j=k+1}^{n+1}\frac{x_j^2}{\ell^2}\right)^{\frac{1}{2}} =  \sqrt{2(n-k)}\left( 1-\frac{([x_1+\ell]/\ell)^2-2}{8\log\ell}\right.\\
\left. -\sum_{i=2}^k \frac{(x_i/\ell)^2-2}{8\log\ell}+o(|\log\ell|^{-1})\right).
\end{multline}
In other words, the profile function $u^{X_0'}$ of the renormalized mean curvature flow $\bar{M}^{X_0'}_{\tau}=e^{\tau/2} M^{X_0'}_{-e^{-\tau}}$ at $\tau_0=-\log(t_0'+\ell^2)$ for $|y
|\leq 9$ satisfies
\begin{equation}\label{eq_unst_dom_rec}
u^{X_0'}(y,\tau_0)=-\sqrt{2(n-k)} \left(\frac{y_1^2+2y_1-1}{8\log\ell}+\sum_{i=2}^k \frac{y_i^2-2}{8\log\ell} + o(|\log\ell|^{-1})\right).
\end{equation}
Remembering that  $y_1$ is an unstable eigenfunction, and taking also into account \cite[Proposition 4.1]{DH_shape}, we thus infer that
\begin{equation}
\|\hat u^{X_0'}(\cdot,\tau_0)\|_{\mathcal{H}} \leq C_{n,k} (\log \ell)^{-1} \leq C_{n,k}^2 \|P_+\hat u^{X_0'}(\cdot,\tau_0)\|_{\mathcal{H}},
\end{equation}
where $C_{n,k}<\infty$ is a numerical constant.

To proceed, note that $\bar{M}^{X_0'}_{\tau}$ is $\eps$-close to the cylinder $\Gamma$ at time
\begin{equation}
\tau_1=-\log\left(\frac{r(x_0,t_0)^2}{2(n-k)}\right).
\end{equation}
Since $\lim_{\tau\to -\infty}\bar{M}^{X_0'}_{\tau}=\Gamma$, it thus follows from the rigidity case of Huisken's monotonicity formula \cite{Huisken_monotonicity}, that the flow $\bar{M}^{X_0'}_{\tau}$ is $\eps'$-close to $\Gamma$ for all $\tau\leq \tau_1$, where $\eps'\to 0$ as $\eps\to 0$. Choosing $\eps$ small enough, we can thus apply Lemma \ref{lem:unstable_mode_ODE}, which yields that for all $\tau\in [\tau_0,\tau_\Lambda+\tau_1]$ we have
\begin{equation}
\|P_+\hat u^{X_0'}(\cdot,\tau)\|_{\mathcal{H}} \geq C_{n,k}^{-1}(\log \ell)^{-1}e^{(\tau-\tau_0)/4}.
\end{equation}
Moreover, since $\bar{M}^{X_0'}_{\tau}$ is $\eps$-close to the cylinder $\Gamma$ at time $\tau=\tau_1$, we have
\begin{equation}
\| P_+\hat u^{X_0'}(\cdot,\tau_1)\|_{\mathcal{H}} \leq C'_{n,k}\eps.
\end{equation}
We have thus shown that
\begin{equation}
\left(\frac{\ell^2+t_0+\frac{r(x_0,t_0)^2}{2(n-k)}}{\frac{r(x_0,t_0)^2}{2(n-k)}}\right)^{1/4}\leq O(\eps) \log \ell .
\end{equation}
Remembering \eqref{negl1} and \eqref{negl2}, we can recast this in the simpler form
\begin{equation}
\frac{\ell}{(\log \ell)^2}\leq O(\eps)r.
\end{equation}
Using \eqref{negl2} again, this implies the assertion.
\end{proof}

\begin{theorem}[slow convergence]
 Let $\mathcal{M}$ be an ancient unit-regular integral Brakke flow in $\mathbb{R}^{n+1}$, whose tangent flow at $-\infty$ is a round shrinking cylinder with slow convergence. Then, $\mathcal{M}$ is an ancient oval.
\end{theorem}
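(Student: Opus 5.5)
The plan is to follow the outline from the introduction. First, use Proposition \ref{thm:cylindrical_diameter} to show that $M_{t_0}$ is compact for every $t_0\le t_\ast$ (hence for all $t$). Second, show that at such times every point of $M_{t_0}$ lies in a controlled region: it is either the center of an $\eps$-bubble-sheet, or it is $\eps$-close (after rescaling by $H^{-1}$) to $\mathbb{R}^{k-1}\times\mathrm{Bowl}_{n+1-k}$. Third, conclude that $\mathcal{M}$ is an ancient oval, either by invoking the definition of ancient ovals (compact, noncollapsed, limits of Wang/White-type constructions) or by a uniqueness characterization.

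For compactness and the structure of $M_{t_0}$, I argue by contradiction. Suppose there are points $p\in M_{t_0}$ with $|p|\gg |t_0|^{1/2}(\log|t_0|)^2$, or that $M_{t_0}$ is unbounded. By \eqref{eq:x34_bound}, the part of $M_t$ away from $B_{10\sqrt{-t}}$ lies inside the solid cylinder of radius $\sqrt{2(n-k)|t|}$ around $\mathbb{R}^k$. Hence any such far point projects far out along $\mathbb{R}^k$. Consider a path in $M_{t_0}$ from the central bubble-sheet region, which exists by \eqref{eq_unst_dom}, out to $p$.

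Define $\mathcal{R}$ as the set of points that are centers of $\eps$-bubble-sheets. By Proposition \ref{thm:cylindrical_diameter}, $\mathcal{R}$ stays inside $B_{|t_0|^{1/2}(\log|t_0|)^2}$, and its radii are comparable to $|t_0|^{1/2}$ up to log factors. So at the first point along the path leaving $\mathcal{R}$, the flow is not an $\eps$-bubble-sheet at its natural scale. I then do a blowup: take a sequence $t_0^i\to-\infty$ with boundary points $p_i$ of $\mathcal{R}$, and rescale by $H(p_i,t_0^i)$. The limit should be an ancient asymptotically cylindrical flow. Its asymptotic cylinder would be $\mathbb{R}^k\times S^{n-k}$, since the backward-in-time neighborhood is a bubble-sheet by Huisken monotonicity, as in the proof of Proposition \ref{thm:cylindrical_diameter}.

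The key point is to show that this limit has \emph{fast} convergence. The boundary point sits at distance $\gtrsim$ radius$\times\log$ from the center, and slow convergence of the limit would again produce a visible unstable mode, excluded by the same ODE argument as Lemma \ref{lem:unstable_mode_ODE}. I expect this to be the main obstacle: excluding slow and mixed convergence for the limit, and identifying its asymptotic cylinder as having the same $k$. Once fast convergence is established, Theorem \ref{thm_fast} identifies the limit as $\mathbb{R}^{k-1}\times\mathrm{Bowl}_{n+1-k}$ (or a cylinder). The bowl factor caps off, which gives compactness: the path must end inside a cap region of size comparable to $|t_0|^{1/2}$ times log.

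Finally, with $M_t$ compact and every point $\eps$-close to either $\mathbb{R}^k\times S^{n-k}$ or $\mathbb{R}^{k-1}\times\mathrm{Bowl}$, the flow is mean-convex and noncollapsed. The bowl regions give positive mean curvature at the caps, and the bubble-sheet regions do so in the middle. Mean convexity can be propagated via the maximum principle for $H$ on the compact smooth flow. Moreover, the flow is ancient with a bubble-sheet tangent flow at $-\infty$ and slow convergence in all $k$ directions. This fits the notion of ancient oval, namely an ancient noncollapsed compact non-self-similar flow. Should the paper's definition require more, for example that the flow be a limit of the standard construction, I would instead appeal to the uniqueness results for ovals with prescribed spectral data. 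In that case, slow convergence in all $k$ directions and compactness are the required hypotheses.
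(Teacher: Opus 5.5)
Your outline is the paper's proof. Proposition \ref{thm:cylindrical_diameter} bounds the bubble-sheet region. You blow up at boundary points of that region, show the limit has fast convergence, and identify it via Theorem \ref{thm_fast} as $\mathbb{R}^{k-1}\times\mathrm{Bowl}_{n+1-k}$. You then conclude that $M_t$ is compact, mean-convex and noncollapsed, which is all the paper uses to call $\mathcal{M}$ an ancient oval, so your fallback to uniqueness results is unnecessary.

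The gap is the step you flag yourself: you never show that the blowup limit has fast convergence, and the sketch you give has the logic backwards. Lemma \ref{lem:unstable_mode_ODE} does not \emph{exclude} a visible unstable mode; it \emph{propagates} one forward in time. Dominance of the unstable mode is the signature of fast convergence, not something slow convergence would produce. The paper handles this step in a single sentence, as a consequence of $\mathcal{M}^\infty$ being a blowdown limit ($r(x_i,t_i)\to\infty$) of a flow with slow convergence.

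Your ingredients do close the gap once assembled in the right order.
\begin{enumerate}
\item Let $\ell_i=|x_i|$ and $r_i=r(x_i,t_i)$, and rescale by $1/r_i$ rather than by $H$. Bubble-sheet centers lie on the axis, not on $M_t$, so $H$ at a center is not defined.
\item Since \eqref{eq_unst_dom} holds with graphical radius $|\tau|^\gamma$, the central region of bubble-sheet centers has radius $\gtrsim |t|^{1/2}|\log|t||^{\gamma}$. Combined with \eqref{negl2}, this gives $\ell_i/r_i\to\infty$, which is the precise form of your ``distance $\gtrsim$ radius$\times\log$''.
\item As in the proof of Proposition \ref{thm:cylindrical_diameter}, the recentered expansion \eqref{eq_unst_dom_rec} gives $\|\hat u\|_{\mathcal{H}}\le\Lambda\|P_+\hat u\|_{\mathcal{H}}$ at $\tau_0^i\approx-\log\ell_i^2$. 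Lemma \ref{lem:unstable_mode_ODE} propagates this inequality to all $\tau\in[\tau_0^i,\tau_\Lambda+\tau_1^i]$.
\item Rescaling by $1/r_i$ shifts renormalized time by $2\log r_i$. The interval becomes $[\tau_0^i+2\log r_i,\tau_\Lambda+\log(2(n-k))]$, whose left endpoint is $\approx-2\log(\ell_i/r_i)\to-\infty$. Hence the limit satisfies $\|\hat u^\infty\|_{\mathcal{H}}\le\Lambda\|P_+\hat u^\infty\|_{\mathcal{H}}$ for all sufficiently negative $\tau$.
\item Slow or mixed convergence of the limit would instead give $\hat u^\infty=-\sqrt{2(n-k)}\sum_{i\in\mathcal{I}}\frac{y_i^2-2}{4|\tau|}+o(|\tau|^{-1})$ with $\mathcal{I}\neq\emptyset$. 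Recentering only changes $\hat u^\infty$ by $O(e^{\tau/2})$, so this gives $\|\hat u^\infty\|_{\mathcal{H}}\ge c|\tau|^{-1}$ but $\|P_+\hat u^\infty\|_{\mathcal{H}}=o(|\tau|^{-1})$, a contradiction.
\end{enumerate}
This excludes slow and mixed convergence at once, so your separate worry about mixed convergence disappears. The value of $k$ is fixed by Huisken monotonicity, exactly as you said.
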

   
\begin{proof} Fixing $\eps>0$ small enough, for any $t\leq t_\ast$ we consider the set
\begin{equation}
\Omega_t=\left\{ x\in\mathbb{R}^k\, : \, M_t \textrm{ has an $\eps$-bubble sheet with center $(x,0)$}\right\}. 
\end{equation}
Note that $\Omega_t$ is open by definition, and $\overline{\Omega}_t$ is compact by Proposition \ref{thm:cylindrical_diameter}.
Now, given any $t_i\to -\infty$ and $x_i \in \partial \Omega_{t_i}$, we consider the rescaled flows
\begin{equation}
\mathcal{M}^i=\mathcal{D}_{1/r(x_i,t_i)}(\mathcal{M}-(x_i,t_i)),
\end{equation}
where $r(x_i,t_i)\to \infty$ again by Proposition \ref{thm:cylindrical_diameter}.
After passing to a subsequence, we can assume $\mathcal{M}^i$ converges to an ancient unit-regular integral Brakke flow $\mathcal{M}^\infty$, which is a nontrivial ancient cylindrical flow by construction. Since it arises as a blowdown limit of an ancient cylindrical flow with slow convergence, $\mathcal{M}^\infty$ is an ancient cylindrical flow with fast convergence, and thus by Theorem \ref{thm_fast} must be a translating $\mathrm{Bowl}_{n+1-k}\times \mathbb{R}^{k-1}$. This shows that for $t$ sufficiently negative every $(x,t)\in\mathcal{M}$ is $\eps$-close to either a bubble-sheet or a translating $\mathrm{Bowl}_{n+1-k}\times \mathbb{R}^{k-1}$. In particular, $\mathcal{M}$ is smooth, compact, and mean-convex. Since its tangent flow at $-\infty$ is a cylinder, it is noncollapsed as well. This proves the theorem.
 \end{proof}

\bigskip

\bibliography{fast_or_slow}

\bibliographystyle{alpha}

\vspace{5mm}

{\sc School of Mathematics, Korea Institute for Advanced Study, 85 Hoegiro, Dongdaemun-gu, Seoul, 02455, South Korea}\\

{\sc Department of Mathematics, University of Toronto,  40 St George Street, Toronto, ON M5S 2E4, Canada}\\

\emph{E-mail:} choiks@kias.re.kr, roberth@math.toronto.edu

\end{document}